\pdfmapfile{=<name>.map}
\documentclass[12pt,oneside,reqno,fleqn]{amsart}
\usepackage{bbm}
\usepackage{cases}
\usepackage{graphicx}
\usepackage{mathrsfs}
\usepackage{stmaryrd}
\usepackage{color}
\usepackage[dvipsnames]{xcolor}
\usepackage{amsfonts}
\usepackage{amsmath,amssymb,amsthm,mathtools}
\usepackage{libertine}
\usepackage[T1]{fontenc}
\usepackage[libertine,varbb]{newtxmath}
\usepackage[colorlinks,linkcolor=Red,citecolor=Red,urlcolor=Red]{hyperref}
\usepackage[shortlabels] {enumitem}
\usepackage[nameinlink,capitalize]{cleveref}
\usepackage[spacing=true,kerning=true,tracking=true,letterspace=50]{microtype}
\usepackage{comment}
\pagestyle{plain}
\textwidth=160 mm
\oddsidemargin=0mm
\topmargin=0mm
\hypersetup{pdftitle={Path-by-path uniqueness for stochastic differential equations under Krylov--Röckner condition}, pdfauthor={Lukas Anzeletti, Khoa L\^e and Chengcheng Ling}}
\numberwithin{equation}{section}
\usepackage[
disable
]{todonotes}

\newcommand{\be}{\begin{eqnarray}}
\newcommand{\ee}{\end{eqnarray}}
\newcommand{\ce}{\begin{eqnarray*}}
\newcommand{\de}{\end{eqnarray*}}
\newtheorem{theorem}{Theorem}[section]
\newtheorem{lemma}[theorem]{Lemma}
\newtheorem{proposition}[theorem]{Proposition}
\newtheorem{corollary}[theorem]{Corollary}
\theoremstyle{remark}
\newtheorem{assumption}[theorem]{Assumption}

\newtheorem{example}[theorem]{Example}
\newtheorem{remark}[theorem]{Remark}
\newtheorem{definition}[theorem]{Definition}

\crefname{eqn}{Equation}{Equations}
\crefname{assumption}{Assumption}{Assumptions}
\crefname{innercustomthm}{Condition}{Conditions}
\crefrangelabelformat{innercustomthm}{#3#1#4-#5#2#6}

\def\<{{\langle}}
\def\>{{\rangle}}
\def\({{\Big(}}
\def\){{\Big)}}

\def\bx{{\mathbf{x}}}

\def\min{{\mathord{{\rm min}}}}

\def\={&\!\!=\!\!&}
\def\bt{\begin{theorem}}
\def\et{\end{theorem}}
\def\bl{\begin{lemma}}
\def\el{\end{lemma}}
\def\br{\begin{remark}}
\def\er{\end{remark}}
\def\bd{\begin{definition}}
\def\ed{\end{definition}}
\def\bp{\begin{proposition}}
\def\ep{\end{proposition}}
\def\bc{\begin{corollary}}
\def\ec{\end{corollary}}
\def\bx{\begin{example}}
\def\ex{\end{example}}

\def\cF{{\mathcal F}}
\def\cff{\cF}

\def\mE{{\mathbb E}}
\def\E{\mE}

\def\mP{{\mathbb P}}

\def\mR{{\mathbb R}}

\def\geq{\geqslant}
\def\leq{\leqslant}
\def\les{\lesssim}

\newcommand{\dd}{\,\mathrm{d}}
\newcommand{\dr}{\dd r}
\newcommand{\R}{{\mathbb R}}
\newcommand{\Rd}{{\R^d}}
\newcommand{\tand}{\quad\text{and}\quad}
\newcommand{\var}{\mathrm{var}}
\newcommand{\LL}{{\mathbb{L}}}

\newcommand{\1}{{\mathbf 1}}

\newcommand{\cjj}{{\mathcal J}}
\newcommand\abz{{\mathcal Z}}

\newcommand{\norm}[1]{{\left\vert\kern-0.25ex\left\vert\kern-0.25ex\left\vert #1
    \right\vert\kern-0.25ex\right\vert\kern-0.25ex\right\vert}}

\renewcommand{\le}{\leq}
\renewcommand{\ge}{\geq}
\allowdisplaybreaks

\newcommand{\simpl}[2]{[#1,#2]^2_{\le}}
\newcommand{\simt}{\simpl{0}{T}}
\newcommand{\simst}{\simpl{S}{T}}
\newcommand{\simi}{I^2_\leq}
\newcommand{\siminfi}{[0,\infty)^2_{\le}}

\allowdisplaybreaks

\begin{document}
	\title{Path-by-path uniqueness for stochastic differential equations under Krylov--Röckner condition}
	
	\date{\today}
	
\author{Lukas Anzeletti, Khoa L\^e and Chengcheng Ling}

\address{Lukas Anzeletti:
Technische Universit\"at Wien,
Institute of Analysis and Scientific Computing,
1040 Wien, Austria
\\
Email: lukas.anzeletti@tuwien.ac.at}
\address{Khoa L\^e: University of Leeds,
School of Mathematics,  Leeds 
LS29JT, U.K.
\\
Email: k.le@leeds.ac.uk}

\address{Chengcheng Ling:
Universit\"at Augsburg,
Institute of Mathematics,
86159 Augsburg, Germany
\\
Email: chengcheng.ling@uni-a.de
 }

	\begin{abstract}
We show that any stochastic differential equation (SDE) driven by Brownian motion with drift satisfying the Krylov--R\"ockner condition has exactly one solution in an ordinary sense for almost every trajectory of the Brownian motion. 
Consequentially, such SDE is strongly complete and forms a random dynamical system. Also, a further application to a boundary value problem is discussed.
	
		\bigskip
		
		\noindent {{\bf AMS 2020 Mathematics Subject Classification:} 60H10, %
         60H50, %
         60J60. %
         } 
		
		\noindent{{\bf Keywords:} Singular SDEs;  path-by-path uniqueness; regularization by noise; strong completeness.}
	\end{abstract}
	
	\maketitle

\section{Introduction}
For an integer dimension $d\geq 1$, a measurable time-dependent vector field $b:[0,\infty)\times \mathbb{R}^d \rightarrow \mathbb{R}^d$ and a measurable  driving signal $\gamma: [0,\infty)\rightarrow \mathbb{R}^d$ with $\gamma_0=0$, we consider the deterministic ordinary differential equation (ODE)
\begin{equation}\label{ODE}
    \dd y_t=b(t,y_t)\dd t+\dd \gamma_t.
\end{equation} 
A solution to \eqref{ODE} over a finite time period $[S,T]$ starting at $x\in\Rd$ is a measurable function $(y_t)_{t\in[S,T]}$ which satisfies
\begin{align}\label{intro.intb}
    \int_S^T|b(r,y_r)|\dd r<\infty,
\end{align}
and 
\begin{equation}\label{ODEint}
y_t=x+\int_S^t b(r,y_r) \dd r + (\gamma_t-\gamma_S), \quad  {t \in [S,T]}.
\end{equation}
The integral in the above formulation  is well-defined in Lebesgue sense and will be understood in such sense throughout. 
For such solution, it is evident that $y- \gamma$ is continuous.

    When $\gamma$ is sampled from a stochastic process $W$ on a stochastic basis $(\Omega,\cff,\mP)$, equation \eqref{ODE} is related to the stochastic differential equation (SDE)
    \begin{align}\label{SDE}
        \dd Y_t=b(t,Y_t)\dd t+\dd W_t.
    \end{align}
    A stochastic process $Y:\Omega\times[S,T]\to\Rd$ is a solution to \eqref{SDE} over a finite time period $[S,T] $ starting at $x\in\Rd$ if for a.s. $\omega$, $(Y_t(\omega))_{t\in[S,T]}$ is a solution to \eqref{ODE} with $W(\omega)$ in place of $\gamma$, 
    i.e. to the $W(\omega)$-driven ODE
    \begin{align}\label{eqn.ODEomega}
        \dd Y_t= b(t,Y_t)\dd t+\dd W_t(\omega).
    \end{align}

    It is now well-understood that the addition of fast oscillating driving signals $\gamma, W$ can make both \eqref{ODE} and \eqref{SDE} well-posed for very irregular vector fields $b$, which is known as \emph{regularization-by-noise} phenomenon.

    For SDEs driven by Brownian motion with bounded measurable vector fields, \cite{Z,Ve} showed that there exists a pathwise unique strong solution to \eqref{SDE}. In \cite{Kr-Ro}, Krylov and R\"ockner  extended this result to drifts $b$ satisfying the condition
    \begin{align}\label{LpLq}
        \Big(\int_0^T\big(\int_\Rd|b(r,x)|^p\dd x\big)^{\frac qp}\dd r\Big)^{\frac1q}<\infty,\quad  (p,q)\in
    \mathcal{J}\coloneqq\left\{(p,q)\in (2,\infty)^2: \frac{2}{q}+\frac{d}{p}<1\right\}.
    \end{align}
    This condition is also known as the \emph{sub-critical Ladyzhenskaya--Prodi--Serrin} (LPS) condition in fluid dynamics, see more in \cite{FF12, BeckFlandoli}.
    Pathwise uniqueness for SDEs with fractional noise was considered first by Nualart and Ouknine in \cite{MR1934157,MR2073441} and more recently in \cite{banos2015strong,le2020stochastic,anzeletti2021regularisation,GaleatiGerencser,butkovsky2023stochastic} among others.

    Well-posedness for \eqref{ODE} is considered in more recent time. Starting from the seminal works of Davie \cite{Davie} and Catellier--Gubinelli \cite{CatellierGubinelli}, there has been an increasing interest in the problem, \cite{AVS,HarangPerkowski,GaleatiGubinelli,GaleatiGerencser, DG22}. A typical result of these articles is that when $\gamma$ is a sample path of an irregular stochastic process (such as fractional Brownian motion), then for almost every such sample paths, ODE \eqref{ODE} has a unique solution, a property known by the terminology \emph{path-by-path uniqueness}.
    The necessary regularity for the vector field is inversely proportional to the roughness of the stochastic process from which $\gamma$ is sampled. 
    In a different direction, when the coefficients are sufficiently regular so that Lyons' rough path theory can be applied, then uniqueness of the corresponding rough differential equation implies path-by-path uniqueness. This is because the stochastic basis is only used to construct a rough path lift of the driving signal and Lyons' rough path analysis (\cite{MR1654527}) is applicable for any fixed trajectory of the lift.
    It is important to highlight that,  according to \cite{Davie,CatellierGubinelli}, the null events are dependent on the initial data. In contrast,  the rough path methodology  implies that the null events are not influenced by the initial data. To differentiate the two properties, we will refer to the latter as \textit{uniform path-by-path uniqueness}, see \cref{def.unipbp} below for a precise statement.

    From a practical perspective, path-by-path uniqueness is most relevant in situations where only a handful samples of the signal can be observed.
    Theoretically, ``path-by-path uniqueness'' is stronger than ``pathwise uniqueness'' (which essentially allows to identify two adapted solutions to \eqref{SDE}).
    Having pathwise uniqueness for \eqref{SDE}
    does not provide uniqueness for the $W(\omega)$-driven ODE \eqref{eqn.ODEomega}
    for a given $\omega$. In fact, examples of SDEs driven by Brownian motion which have pathwise uniqueness but not path-by-path uniqueness are given in \cite{ShaWre,Anzeletti}. We point to \cref{pbpvspathwise} at the end of the current article for further discussions on the related concepts. 

    The main goal of the current article is the following result, which will be stated more precisely later in \cref{thm.main}. %

    \smallskip
    \textbf{\textsc{Theorem.}}  \textit{Assuming that $W$ is a standard Brownian motion, then uniform path-by-path uniqueness
    and strong completeness for \eqref{SDE} holds under the Krylov--R\"ockner condition \eqref{LpLq}.} %
    \smallskip

In this context,   \emph{strong completeness} refers to the property that  for almost all trajectories of the Brownian motion, the equation generates a random semiflow over all nonnegative time.
    Strong completeness is a rather demanding property and may fail even for SDEs with bounded smooth  coefficients (but with unbounded derivatives), with examples given by Li and Scheutzow in \cite{LS11}. 
    Strong completeness for SDEs driven by Brownian motion with standard regularity conditions are considered in \cite{MR1305784,SS17}.
    Alternatively, when the SDE under consideration can be lifted to a rough differential equation which has unique global solution, then it is strongly complete, see \cite{MR3567487}. 
    The SDE \eqref{SDE} under condition \eqref{LpLq} falls outside the standard It\^o framework and Lyons' rough path theory. Hence, our result provides the first example of strongly complete singular SDEs driven by Brownian motion.

    \smallskip
 { Following the establishment of \emph{path-by-path uniqueness} and \emph{strong completeness} under the Krylov--Röckner condition, we also consider their applications, which are highlighted below and detailed in \cref{sec:implications}.
}{
 
       \begin{enumerate}
    \item \emph{Inverse flow}\\
        The inverse of the solution flow (\emph{inverse flow}) to an SDE is useful in wider contexts, e.g. representation of solutions to the singular stochastic transport equation  \cite{MR2593276, MR3017266}, and to nonlinear partial differential equations \cite{MR2376844, MR2653231}. 
        Because of irregularity of the drift and the   dependence of stochastic analysis on the direction of time,
        construction of the inverse flow often leads to many technical difficulties (\cite{MR1472487,FF}).
        Using \emph{path-by-path uniqueness}, we demonstrate in \cref{cor:flow-back} that the inverse flow for singular SDEs can be easily deduced as in classical theory for ODEs, in contrast with earlier approaches.
 
     \item \emph{Random dynamical systems (RDS's) induced by singular  SDEs}\\
     Over recent decades, RDSs, introduced by L. Arnold \cite{MR1723992}, have been extensively studied for understanding complex systems, among which many are described by SDEs. Herein, we explain that  SDEs with  singular drifts, such as \eqref{SDE} under \eqref{LpLq}, also generate RDS's.
     Traditionally, this is done by %
 perfection procedures \cite{MR1314175} %
 but this is a not simple task -- especially for singular SDEs due to regularity issues (e.g. \cite{MR1723992, MR1373727, MR4431447}). Path-by-path uniqueness, however, resolves this challenge naturally as it inherently implies that the underlying SDE is a well-posed random ODE, and therefore satisfies the RDS requirements without the need for such refinements, see \cref{RDS:path-by-path}.
  \item \emph{Boundary value problem}\\
  Consider the boundary value problem defined by the dynamic \eqref{SDE} with a constraint $F_0Y_0+F_1Y_1=0$ between the initial condition and terminal value over a bounded interval $[0,1]$.
Historically, boundary value problems for SDEs, which examine existence and uniqueness \cite{NualartPardoux, MR1002898}, absolute continuity of laws \cite{NualartPardoux},  Markovian properties \cite{NualartPardoux, MR1002898, MR1349171, MR580907}, and more recently even  for rough differential equations \cite{lejay:hal-03626402},  have been considered under certain continuity assumptions on the drifts.
In \cref{subsec:boundary}, we consider the well-posedness of these problems for SDEs with singular drifts. Path-by-path uniqueness plays a key role in reducing to a deterministic analysis, avoiding the need for any stochastic analysis as employed in earlier literature.
This approach is similar to that of \cite{lejay:hal-03626402} in which the authors deal with problems with regular coefficients. Hence, showing that this approach also works with problems corresponding to singular drifts under condition \eqref{LpLq} is our novel contribution.

    \end{enumerate}}
    Through the applications described above, we hope to convey that path-by-path uniqueness, once established, is a useful essential property. On one hand, it overcomes earlier technical difficulties, on the other hand, it opens up pathways to new and nontrivial implications.
   
    \smallskip
    \noindent\textbf{Discussion on the literature.} 
    In his groundbreaking work \cite{Davie}, Davie established the first result on path-by-path uniqueness for the SDE \eqref{SDE}  with bounded measurable drift. 
    His argument can be loosely summarized in three main steps. First, Davie provided the regularizing estimates (see \cite[Proposition 2.1]{Davie})
    \begin{align}\label{intro.Davie1}
    \left\|\int_0^1[g(r,W_r+x)-g(r,W_r)]\dd r \right\|_{L^m(\Omega)}\le C(m/2)!
    |x|,
    \end{align}
    where $m\ge2$, $g:[0,1]\times \Rd\to\R$ is a Borel function bounded by $1$, $x\in\Rd$ and $W$ is a standard Brownian motion. In the second step, \eqref{intro.Davie1} was enhanced to $\omega$-wise estimates for the map 
    \begin{align}\label{intro.Davie2}
     t\mapsto\int_0^tg(r,W_r(\omega)+\psi_r)\dd r
     \end{align}
    for every Lipschitz function $\psi$ (compare with \cite[Lemma 3.6]{Davie}). In the last step, these estimates were applied to the solution of \eqref{SDE} (as the integral part of the solution is Lipschitz due to boundedness of $b$) to derive path-by-path uniqueness (see \cite[Lemma 3.7]{Davie}).

    Davie's approach, rooted in first  principles, unfortunately added complexity, especially in the final step. Nevertheless, his work has significantly influenced subsequent research, inspiring various new directions. For instance, Catellier and Gubinelli in \cite{CatellierGubinelli} reinterpreted and expanded upon Davie's estimates within the context of nonlinear Young integration. To see this connection, one should compare \cite[Outline of proof, pg. 12]{Davie} with \cite[begining of Section 2]{CatellierGubinelli}, and with \cref{lem.nonlinYoung} herein. By leveraging higher regularity in Davie's estimates, in exchange for asking for more regularity of the drift or more regularizing effect from the noise, \cite{CatellierGubinelli} obtained path-by-path uniqueness for a large class of fractional SDEs, although not encompassing the case considered by Davie.  
    Another notable work is \cite{AVS} by  Shaposhnikov. 
    Therein, the author gave another proof for \eqref{intro.Davie1} using It\^o calculus, new estimates for \eqref{intro.Davie2} by means of entropic numbers of space of Lipschitz paths (\cite[Proposition 2.1, Lemma 3.3]{AVS}), and a novel way of proving path-by-path uniqueness    utilizing these estimates and the semiflow generated by \eqref{SDE}, see \cite[Proposition 2.1, Lemma 3.3 and proof of Theorem 1.1. respectively]{AVS}. Therefore,  Shaposhnikov successfully recovered Davie's result by simpler justifications for Davie's three main steps.  
    In more recent works, techniques from \cite{CatellierGubinelli,AVS} have been combined where Davie-type estimates were obtained through nonlinear Young integration (or sewing techniques) and path-by-path uniqueness was achieved through Shaposhnikov's argument utilizing properties of the generated semiflow. For instance, \cite{GaleatiGerencser,DG22} showed path-by-path uniqueness for a large class of fractional SDEs with time-inhomogeneous drifts.
    However, it is intriguing that most recent advancements since \cite{AVS} have not been able to replicate Davie's result, highlighting the ongoing challenges in achieving path-by-path uniqueness for SDEs with irregular drifts.
    This underscores the complexity of the problem for which  the current article aims to resolve.

    Path-by-path uniqueness for \eqref{SDE} under condition \eqref{LpLq} has been speculated and attempted in \cite{AVS,F}. \cite{F} stopped shortly after deriving Davie's estimate \eqref{intro.Davie1} under condition \eqref{LpLq} with a non-optimal constant, which according to \cite{Flandoli}, ``do[es] not allow to apply the arguments of the second part of the proof of'' \cite{Davie}.
    In \cite[Remark 3.8]{AVS}, the author pointed out two major obstacles. One is showing Davie's estimate \eqref{intro.Davie1} under \eqref{LpLq} with a quantitative constant on the right-hand side. The other one is that the integral part of the solution to the $W(\omega)$-driven ODE \eqref{eqn.ODEomega}  is no longer Lipschitz due to the unboundedness of $b$ but only expected to be H\"older continuous (as if it was the same as the strong solution). 
    However, given sharp Davie estimates (which we establish in \eqref{prop.davie} below), solely employing Shaposhnikov's approach, i.e. utilizing estimates for entropy numbers of space of H\"older paths, would lead to a more restricted condition compared to \eqref{LpLq}.
    Alternatively, one could arguably measure the regularity of the integral part using variation, however the space of paths of finite variations does not have finite entropy numbers.
    Lastly, within the nonlinear Young framework, such as \cite{GaleatiGerencser,CatellierGubinelli,DG22}, the standing assumptions are stricter and do not allow condition \eqref{LpLq}.
    To summarize, the existing approaches from literatures can not be carried forward to treat SDEs under condition \eqref{LpLq}.

    The current paper provides a comprehensive approach which is rather general and could be applied to other classes of equations.
    First,  we obtain sharp Davie estimates under the Krylov-R\"ockner condition \eqref{LpLq} (see \cref{prop.davie}) by means of John--Nirenberg inequality, which effectively reduces estimating all positive moments to estimating only the second moment. In fact, contrary to previous approaches, sharp quantitative constants for Davie estimates are not essential for our method. Nevertheless, usage of John--Nirenberg shortens existing arguments and, at the same time, refines known results from \cite{Davie,rezakhanlou2014regular,F} by providing sharp quantitative constants. Building upon Davie estimates, we  use nonlinear Young integration to obtain a key regularizing estimate for averaging operators \eqref{intro.Davie2} along Brownian motion, namely \eqref{est.young} herein.
    This development is particularly significant as it leverages the finite variation property of the integral part of the solution, which sidesteps the need for entropy estimates, as employed in \cite{AVS}. Consequently, this allows us to show that equation \eqref{SDE} generates a random continuous semiflow over finite time periods and to obtain  a priori estimates for any solution to \eqref{eqn.ODEomega} (\cref{prop.apri1,prop.regularityabs}). Applying these a priori estimates and the regularity of the semiflow generated by \eqref{SDE}, we are able to identify any solution with the semiflow, thereby, showing path-by-path uniqueness. Strong completeness is an immediate consequence of path-by-path uniqueness and continuity of the random semiflow over arbitrary finite time periods. 

    Although our approach employs  similar tools as those used in \cite{CatellierGubinelli,AVS,GaleatiGerencser}, it is both more effective and logically distinct from the aforementioned works. For instance, due to discontinuity of $b$, showing that \eqref{SDE} generates a random continuous semiflow is subtly nontrivial and has been overlooked in previous works. Our approach can recover the case considered by Davie while at the same time, circumvent the major obstacles pointed out by Shaposhnikov. 
    Additionally, our arguments are generic and are not strictly tied to Brownian motion, as long as the random semiflow generated by the SDE \eqref{SDE} and the averaging operators along the driving signal $W$ are sufficiently regular.
    Consequentially, this leads to a new uniqueness criterion for ODEs of the form \eqref{ODE},
    which does not require any regularity on the vector field, but instead relies on the regularizing effect of the driving signal $\gamma$. These results are summarized in \cref{sec:ode_uniqueness}.

    \smallskip
    \noindent\textbf{Structure of paper.} Main results are stated in \cref{sec:main_results}. In \cref{sec.Davie}, we show Davie estimates under \eqref{LpLq}. 
    In \cref{sec:path_estimates}, we show some path-by-path regularizing estimates, which are consequences of Davie estimates and the sewing lemma.
    The proof of the main results are presented in \cref{sec:proof_main_results}. 
    The applications and proofs are presented in \cref{sec:implications}. In \cref{sec:ode_uniqueness}, we summarize a uniqueness criterion for \eqref{ODE} which is based on the regularization effect of the driving signal.
    Different notions of solutions and their relations are recalled in \cref{pbpvspathwise}. 

    \smallskip
    \noindent\textbf{Frequently used notation.}
   Define for any random variable $X$ and  for any $m\geq1$, $ \Vert X\Vert_{L^m(\Omega)}\coloneqq[\mE(|X|^m)]^{1/m}$. 
   For any $R>0$, we denote by $B_R$ the closed ball in $\R^d$ with radius $R$ centered at the origin.
    For each $p\in[1,\infty]$, $L^p(\Rd)$ denotes the usual Lebesgue space on $\mR^d$.
    For any $0\leq s\leq t$, the space $\LL_p^q([s,t])$ contains all measurable functions $f:[s,t]\times\mR^d\rightarrow\mR$ such that 
    \begin{align*}
       \Vert f\Vert_{\LL_p^q([s,t])}\coloneqq \Big(\int_s^t\big(\int_{\mR^d}|f(r,x)|^p\dd x\big)^{\frac{q}{p}}\dd r\Big)^{\frac{1}{q}}<\infty
    \end{align*}
    for $p,q\in[1,\infty)$ and with standard modifications when $p$ or $q$ is infinite. We abbreviate $\LL_p^q=\LL_p^q(\R)$.
    For $I\subset [0,\infty)$, let $I^2_\leq\coloneqq\{(s,t)\in I\times I:s,t \in I; s\leq t\}$.
    We say that a continuous function $w:\simpl{S}{T}\to [0,\infty)$ is a control if $w(s,u)+w(u,t)\leq w(s,t)$ whenever $s\le u\le t$.
    For a function $(\psi_t)_{t\in[0,T]}$ of finite variation and each $(u,v)\in[0,T]_\leq^2$, we define 
    \begin{align*}%
        [\psi]_{\var;[u,v]}\coloneqq\sup_{\pi}\sum_{[s,t]\in\pi}|\psi_t-\psi_s|,
    \end{align*}
      where the supremum is taken over all partitions $\pi$ of $[u,v]$. It is well documented that $(u,v)\mapsto [\psi]_{\var;[u,v]}$ is a control, see \cite{friz2010multidimensional}. 
    We write $H\les G$ if $H\le C G$ for some universal finite positive constant $C$.

    Let $\{\cff_t\}_{t\ge0}$  be a filtration and assume that $(\Omega,\cff,\{\cff_t\}_{t\ge0}, \mP)$ satisfies the usual conditions. 
    For each $s\in[0,T]$, $\mE_s$ denotes the conditional expectation with respect to $\cff_s$, i.e. $\E_s[\cdot]=\E[\,\cdot\,|\cff_s]$.
    We assume  that $W$ is a standard Brownian motion with respect to $\{\cff_t\}_{t\ge0}$.
    \newpage
    \section{Main results} %
\label{sec:main_results}

    To state our results precisely, we briefly recall some key concepts.

   \begin{definition}
       \label{def:randomsemiflow}
        Let $I=[0,T]$ for some finite constant $T>0$ or $I=[0,\infty)$.   
    A \emph{semiflow} generated by the ODE \eqref{ODE} over the period $I$ is a measurable map $(s,t,x)\mapsto\phi^{s,x}_t$ defined on $\simi\times\Rd$ such that for every $s,u,t\in I$,  $0\le s\le u\le t$, and every $x\in\Rd$,
    \begin{gather*}
        \int_s^t |b(r,\phi^{s,x}_r)|\dd r<\infty, \quad\phi^{s,x}_t=x+\int_s^t b(r,\phi^{s,x}_r)\dd r+ \gamma_t-\gamma_s
        \tand\phi^{s,x}_t=\phi^{u,\phi^{s,x}_u}_t.
    \end{gather*}
    For a semiflow $\phi$ on $ I$, we say that $\phi$ is continuous if $(s,t,x)\mapsto \phi_t^{s,x}$ is continuous on $\simi\times\Rd$.   
    We say that $\phi$ is \emph{locally $\kappa$-H\"older continuous in space} if 
    for every compact set  $[u,v]\times K\subset I\times\Rd$, there exists a finite constant $C=C(\kappa,[u,v]\times K)$ such that for every $(s,t) \in [u,v]^2_\le$ and $x,y \in K$
    \begin{align*}
        |\phi^{s,x}_t- \phi^{s,y}_t|\le C|x-y|^\kappa.
    \end{align*}
   \end{definition}

    Next, we give precise definitions of path-by-path uniqueness, random semiflow and strong completeness for \eqref{SDE}.
    \begin{definition}\label{def.unipbp}
        Let $T>0$ and $S\in[0,T)$. We say that \emph{path-by-path uniqueness} for \eqref{SDE} holds over $[S,T]$  if for  any $x\in \Rd$ and any probability space on which a Brownian motion $W$ is defined, there exists a set of full measure $\tilde{\Omega}$, such that for each $\omega \in \tilde{\Omega}$, there exists a unique solution on $[S,T]$ starting at $x$  to the $W(\omega)$-driven ODE \eqref{eqn.ODEomega}.

        We call such property uniform if the set $\tilde{\Omega}$ is independent from $S,T,x$. In other words, we say that \emph{uniform path-by-path uniqueness} for \eqref{SDE} holds if for any probability space on which a Brownian motion $W$ is defined, there exists a set of full measure $\tilde{\Omega}$, such that for all $0\le S<T$, $x \in \mathbb{R}^d$ and any $\omega \in \tilde{\Omega}$, there exists a unique solution on $[S,T]$ starting at $x$  to the $W(\omega)$-driven ODE \eqref{eqn.ODEomega}.
        \end{definition} 
    \begin{definition} \label{def:semiflow}
        We say that \eqref{SDE} generates a random semiflow $X=(X^{s,x}_t)_{s,t,x}$ over the period $I$ if the map
        \begin{align*}
            \simi\times\Rd\times \Omega&\to\Rd
             \\(s,t,x,\omega)&\mapsto X^{s,x}_t(\omega)
        \end{align*} 
        is measurable and for a.s. $\omega$, $(X^{s,x}_t(\omega))_{s,t,x}$ is a semiflow generated by the ODE \eqref{eqn.ODEomega} over $I$.
        
        For a random semiflow $X$ on $ I$, we say that $X$ is continuous
        if for a.s. $\omega$, the map  $(s,t,x)\mapsto X_t^{s,x}(\omega)$ is continuous on $\simi\times\Rd$. We
        say that $X$ is \emph{locally $\kappa$-H\"older continuous in space}
        if for a.s. $\omega$, $(X^{s,x}_t(\omega))_{s,t,x}$ is locally $\kappa$-H\"older continuous in space (see Definition~\ref{def:randomsemiflow}).
    \end{definition}
    \begin{definition}\label{def.strcomplete}
        We say that the SDE \eqref{SDE} is \emph{strongly complete} if for a.s. $\omega$, the ODE \eqref{eqn.ODEomega} generates a continuous semiflow over $[0,\infty)$. This means that there exists a measurable map 
        \begin{align*}
            \siminfi\times\Rd\times \Omega&\to\Rd
             \\(s,t,x,\omega)&\mapsto X^{s,x}_t(\omega)
         \end{align*}
        such that for a.s. $\omega$ the following properties hold:
        \begin{enumerate}[(i)]
            \item the map $(s,t,x)\mapsto X^{s,x}_t(\omega)$ is continuous;
            \item \label{it.strcompeqn} for every $0\le s\le u\le t$ and $x\in\Rd$,
            \begin{align*}
                \int_s^t|b(r,X^{s,x}_r(\omega))|\dr<\infty,
                \quad X^{s,x}_t(\omega)=x+\int_s^t b(r,X^{s,x}_r(\omega))\dr+W_t(\omega)-W_s(\omega)
            \end{align*}
            and
            \begin{align*}
                X^{s,x}_t(\omega)=X^{u,X^{s,x}_u(\omega)}_t(\omega).
            \end{align*}
        \end{enumerate}
    \end{definition}
        Our definition of strong completeness differs from the literatures' in that \ref{it.strcompeqn} demands that the event on which the semiflow solves the equation is independent from initial data.
        Heuristically, strong completeness requires that the random semiflow is defined  globally over all nonnegative times $[0,\infty)$ and continuously in $(s,t,x)$ over $\siminfi\times\Rd$. That the semiflow is generated by the equation means that for almost every $\omega$ and every initial data $(s,x)$, the map $t\mapsto X^{s,x}_t(\omega)$ is a global solution which does not blow up in finite time. 
        The latter property can be thought as ``path-by-path non-explosion''. 
        SDEs which have global pathwise solutions without explosion may fail to have path-by-path non-explosion with examples given in \cite{LS11}.

    Our main results are stated in the following theorem.
    \begin{theorem}\label{thm.main}
    Assume that $b$ fulfills \eqref{LpLq} for all $T<\infty$. There exists an event $\Omega_{b}\in\cff$ which depends only on $b$ and has full probability measure such that for every $\omega\in \Omega_{b}$, the following statements hold true.
    \begin{enumerate}[(A)]
        \item\label{it.complete} {[Strong completeness]} The $W(\omega)$-driven  ODE \eqref{eqn.ODEomega} generates a continuous semiflow $(X^{s,x}_t(\omega))_{s,t,x}$ over $[0,\infty)$   which is locally $\kappa$-H\"older continuous in space for every $\kappa\in(0,1)$. 
        \item\label{it.pbpunique} {[Uniform path-by-path uniqueness]} Any solution $Y$ to \eqref{eqn.ODEomega} starting at $x\in\Rd$ from time $s\ge0$
        is identical to $X^{s,x}_\cdot(\omega)$.
    \end{enumerate}
    \end{theorem}

    \begin{remark}\label{rem:improvement}
        \cref{thm.main} improves upon \cite[Theorem 1.2]{FF} by showing  joint continuity in all parameters and allowing the set of full measure on which the flow solves the equation to be uniform with respect to the initial data.
        Because $b$ is not continuous, showing that the continuous extension $X$ is a solution to \eqref{SDE} is a major obstacle.
        We overcome this thanks to the path-by-path regularizing estimates obtained by sewing techniques and Davie estimates, see \cref{lem.nonlinYoung,cor.intcont}. These arguments can be applied for other situations and it is expected that the conclusions of \cref{thm.main} hold for SDEs with bounded measurable drifts, more details are discussed in \cref{rmk.issue}.
    \end{remark}
    \begin{remark}\label{rem:flow}
        The semiflow $(X^{s,x}_t)$ in \cref{thm.main} is  constructed by taking the continuous extension of the family of strong solutions over the dyadics. Hence, it  inherits other properties and as such, it is adapted and is a flow of  homeomorphisms. In particular, it is possible to modify the event $\Omega_b$ in \cref{thm.main} so that for each $\omega\in\Omega_b$, the map $x\mapsto X^{s,x}_t(\omega)$ is a homeomorphism on $\Rd$. Because this property is not used in proving proving path-by-path uniqueness, we have omitted it in the statement of \cref{thm.main}. 
    \end{remark}

        \begin{remark}%
    In \cref{thm.main}, the dependence of $\Omega_b$ on $b$ seems necessary with our methods. The event $\Omega_{b}$ essentially contains $\omega$ such that 
    \begin{itemize}
        \item[(i)] the semiflow $X^{s,x}_t(\omega)$ is almost Lipschitz in $x$,
        \item[(ii)] the averaging operators
        \begin{align*}
            (t,x)\mapsto \int_0^t b(r,W_r(\omega)+x)\dd r,\,\int_0^t |b|(r,W_r(\omega)+x)\dd r
        \end{align*}
        are jointly locally H\"older continuous with exponent $(\alpha,1- \varepsilon)$ for some $\alpha\in(0,1)$ and some $\varepsilon>0$ sufficiently small.   
    \end{itemize}
    \end{remark}
    \begin{remark}
        In contrast to previously obtained strong uniqueness results showing uniqueness among a class of adapted solutions, \cref{thm.main} effectively drops these requirements via the $\omega$ by $\omega$ argument. In particular, we can allow for initial condition depending on future information of the driving process, e.g. $X_0(\omega)=W_1(\omega)$. Solutions of such equations are anticipating and fall outside the  scope of previous studies.  
    \end{remark}

    In the following corollary we observe a general principle: If over any finite period of time, an SDE generates a random continuous semiflow and path-by-path uniqueness holds, then the SDE is strongly complete.
    Although this fact has been observed in \cite{MR3567487} in the context of rough differential equations, singular SDEs form another distinct class of equations. It is therefore necessary to draw the connection.

    \begin{corollary}
    \label{cor.strcomplete}
      Suppose that for every $T>0$, \eqref{SDE} generates a random continuous semiflow over $[0,T]$ and that path-by-path uniqueness holds over $[0,T]$. Then \eqref{SDE} is strongly complete.
   \end{corollary}
  The proof of the above result are presented in 
\cref{sec:proof_main_results}.

\section{Davie estimates}\label{sec.Davie}
We show a variant of \emph{Davie estimates} in \cite[Propositions 2.1, 2.2]{Davie}. Herein, we fix a finite time horizon $T$. 
\begin{proposition}\label{prop.davie}
    Let $f$ be a Schwartz function in $\LL^q_p$ for some $(p,q)\in\cjj$.  Then there is a  constant $C=C(d,p,q,T)$ such that
    for every $(s,t) \in \simpl{0}{T}$ and $m\ge1$,%
    \begin{align}\label{gradf}
        \left\|\int_s^t\nabla f(r,W_r)\dd r\right\|_{L^m(\Omega)}\le C \Gamma\left(m(\frac12+\frac d{2p})+1\right)^\frac{1}{m} \|f\|_{\LL^q_p([s,t])}(t-s)^{\frac12-\frac1q-\frac d{2p}},
    \end{align}
    where $\Gamma(r)=\int_0^\infty u^{r-1}e^{-u}\dd u$ is the Gamma function.
    Consequently, for every bounded continuous function $f$ in $\LL^q_p([0,T])$, every $x,y\in\mR^d$, every $(s,t) \in \simpl{0}{T}$ and $m\ge1$, we have
    \begin{multline}\label{est:exp-diff}
        \left\|\int_s^t[f(r,W_r+x)-f(r,W_r+y)]\dd r\right\|_{L^m(\Omega)}
       \\\le C \Gamma\left(m(\frac12+\frac d{2p})+1\right)^\frac{1}{m} \|f\|_{\LL^q_p([s,t])}(t-s)^{\frac12-\frac1q-\frac d{2p}}|x-y|.
    \end{multline} 
\end{proposition}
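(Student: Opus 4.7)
The plan is to first bound the conditional expectation $\mE_s[A_{s,t}]$ of the additive process $A_{s,t}:=\int_s^t\nabla f(r,W_r)\dr$ using the Markov property and a heat-kernel estimate, then invoke the quantitative John--Nirenberg inequality of \cite{Le2022} to promote this BMO-type control into the full $L^m(\Omega)$ bound \eqref{gradf} with its $\Gamma$-factor, and finally reduce \eqref{est:exp-diff} to \eqref{gradf} via the fundamental theorem of calculus.

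For the conditional expectation bound, the Markov property of $W$ gives, for $s\le r\le t$,
\begin{align*}
    \mE_s[\nabla f(r,W_r)]=(\nabla P_{r-s}f(r,\cdot))(W_s),
\end{align*}
where $(P_t)_{t\ge0}$ is the heat semigroup on $\Rd$. Combining the Gaussian bound $\|\nabla P_t g\|_{L^\infty(\Rd)}\les t^{-\frac12-\frac{d}{2p}}\|g\|_{L^p(\Rd)}$ with H\"older's inequality in time with conjugate exponent $q/(q-1)$ produces
\begin{align*}
    \left\|\mE_s[A_{s,t}]\right\|_{L^\infty(\Omega)}\les \|f\|_{\LL^q_p([s,t])}(t-s)^{\frac12-\frac1q-\frac d{2p}},
\end{align*}
the integrability of the singular factor being guaranteed by the subcriticality $(p,q)\in\cjj$. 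Since $(s,t)\mapsto A_{s,t}$ is manifestly additive, the quantitative John--Nirenberg inequality of \cite{Le2022} upgrades this $L^\infty$-conditional control to an $L^m(\Omega)$ moment estimate whose constant grows like $\Gamma(m(\frac12+\frac d{2p})+1)$, yielding \eqref{gradf}.

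For \eqref{est:exp-diff}, the identity
\begin{align*}
    f(r,W_r+x)-f(r,W_r+y)=(x-y)\cdot\int_0^1\nabla f(r,W_r+y+\theta(x-y))\dd\theta,
\end{align*}
together with Minkowski's integral inequality, reduces matters to a uniform bound on $\|\int_s^t\nabla f(r,W_r+z)\dr\|_{L^m(\Omega)}$ in $z\in\Rd$. Applying \eqref{gradf} to the translated drift $f(r,\,\cdot+z)$, whose $\LL^q_p$ norm coincides with $\|f\|_{\LL^q_p}$ by translation invariance of Lebesgue measure, gives the claim.

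The principal obstacle is the $m$-dependence in the passage from the $L^\infty$-conditional bound to the full $L^m(\Omega)$ estimate. Earlier approaches such as \cite{Davie,rezakhanlou2014regular} achieve this by combinatorial iteration of Khasminskii-type inequalities, which is technically heavy and yields opaque constants; here, once the additive structure of $A_{s,t}$ and its conditional bound are in hand, the quantitative John--Nirenberg inequality of \cite{Le2022} delivers the sharp $\Gamma$-factor in a single stroke.
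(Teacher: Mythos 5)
Your reduction of \eqref{est:exp-diff} to \eqref{gradf} by the fundamental theorem of calculus and translation invariance matches the paper, and your intention to feed a conditional bound into the quantitative John--Nirenberg inequality of \cite{Le2022} is also the paper's strategy. However, there is a genuine gap in the quantity you actually bound. The Markov-property computation $\mE_s[\nabla f(r,W_r)]=(\nabla P_{r-s}f(r,\cdot))(W_s)$ together with the heat-kernel bound controls only $\left|\mE_s\big[\int_s^t\nabla f(r,W_r)\dd r\big]\right|$, i.e.\ the conditional \emph{mean} of the increment. The hypothesis of the John--Nirenberg lemma (\cref{lem.vmo}, and the $\mathrm{VMO}$ classes of \cite{Le2022}) is a bound on $\|\mE_s|V_t-V_s|\|_{L^\infty(\Omega)}$, the conditional expectation of the \emph{absolute value} of the increment, uniformly over (stopping) times. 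These are very different: a martingale has vanishing conditional mean increments but need not satisfy any BMO-type bound, so additivity of $A_{s,t}$ plus control of $\mE_s[A_{s,t}]$ does not place $V$ in the class to which the quantitative John--Nirenberg inequality applies. Your argument therefore never establishes the input needed for the $\Gamma$-growth moment bound.

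Closing this gap is where the real work of the proof lies. The paper estimates the conditional \emph{second} moment $I_{s,t}=\tfrac12\mE_s|V_t-V_s|^2$, which after expanding the square and conditioning produces a term in which a gradient still sits on $f$ (schematically $\int\nabla f(r_1,\cdot)\cdot\nabla p_{r_1-r_2}$), and this cannot be absorbed by H\"older's inequality and the $\LL^q_p$ norm of $f$ alone. The paper handles it by an integration by parts splitting $I_{s,t}=I_1+I_2$: $I_1$ is controlled exactly by the gradient heat-kernel bounds you use, but $I_2$ requires the boundedness on $\LL^q_p$ of the parabolic singular integral operator $D$ (\cref{lem.parRiesz}, proved via Lizorkin's multiplier theorem). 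Only after obtaining $|I_{s,t}|\les\|f\|^2_{\LL^q_p([s,t])}(t-s)^{1-\frac2q-\frac dp}$ almost surely does Cauchy--Schwarz give the bound on $\|\mE_s|V_t-V_s|\|_{L^\infty(\Omega)}$ that \cref{lem.vmo} converts into \eqref{gradf}. None of this second-moment analysis, nor the multiplier lemma it relies on, appears in your proposal, so as written the proof does not go through.
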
%
Contrary to Davie's original arguments (see \cite{F}),
the precise growth constant in \eqref{est:exp-diff} is not essential to our proof of \cref{thm.main}.  Nevertheless, it improves upon previous known estimates from \cite{rezakhanlou2014regular,F}. To prove this result, we follow a recent approach from \cite{Le2022} based on the quantitative John--Nirenberg inequality. 
We prepare two auxiliary lemmas, putting  
\begin{align*}
  p_t(x)\coloneqq\1_{(t>0)}(2\pi t)^{-\frac{d}{2}}e^{-\frac{|x|^2}{2t}}.  
\end{align*}

\begin{lemma}\label{lem.parRiesz}
    The operator $f\mapsto D(f)$ defined by
    \begin{align*}
        D(f)(t,x)
        \coloneqq\int_{\R\times\Rd}\nabla p_s(y)\cdot \nabla f(t-s,x-y)\dd (s,y),
    \end{align*}
   is  bounded  on $\LL^q_p$ for every $(p,q)\in(1,\infty)^2$.
\end{lemma}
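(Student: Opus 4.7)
My strategy is to identify $D$ with the parabolic singular integral operator associated to the heat equation, and then invoke classical mixed-norm maximal regularity.

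First, I would rewrite $D$ as a pure space--time convolution. Since differentiation commutes with convolution, for each coordinate $i$ one has $(\partial_i p)*(\partial_i f) = \partial_i[(\partial_i p)*f] = \partial_i^2(p*f)$, where $*$ denotes convolution in $(s,y)\in\R\times\Rd$ with $p$ extended by $0$ for $s\le 0$. Summing over $i$ yields $D(f) = \Delta(p*f) = \Delta u$ where $u:=p*f$. By the defining property of the heat kernel, $u$ is the unique tempered solution of $\partial_t u - \tfrac12\Delta u = f$ on $\R\times\Rd$ vanishing at $-\infty$. Thus the lemma reduces to the parabolic maximal regularity estimate $\|\Delta u\|_{\LL^q_p} \les \|f\|_{\LL^q_p}$ for $(p,q)\in(1,\infty)^2$.

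Second, I would establish this estimate by a Fourier multiplier computation. Taking the full Fourier transform in $(t,x)$ produces
\begin{align*}
\widehat{\Delta u}(\tau,\xi) = m(\tau,\xi)\widehat f(\tau,\xi), \qquad m(\tau,\xi) := -\frac{|\xi|^2}{i\tau + \tfrac12|\xi|^2}.
\end{align*}
The symbol $m$ is bounded, smooth away from the origin, and invariant under the parabolic scaling $(\tau,\xi)\mapsto (\lambda^2\tau,\lambda\xi)$; a direct check shows that its derivatives satisfy H\"ormander-type estimates with respect to the parabolic metric $\rho((t,x),(s,y)) := |t-s|^{1/2}+|x-y|$. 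Consequently the convolution kernel of $D$ is a parabolic Calder\'on--Zygmund kernel, so the associated operator is bounded on $L^r(\R\times\Rd)$ for every $r\in(1,\infty)$ by the standard Calder\'on--Zygmund theorem, which handles the diagonal case $p=q$.

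The only real obstacle is passing from the diagonal case to the mixed-norm space $\LL^q_p$ with $p\neq q$. For this I would view $D$ as a vector-valued singular integral in the time variable taking values in the Banach space $L^p(\Rd)$, verify H\"ormander's integral condition with respect to parabolic balls in the ``frozen'' spatial variable, and then apply the standard mixed-norm extension of Calder\'on--Zygmund theory. Equivalently---and shorter---one can directly cite the classical parabolic mixed-norm maximal regularity for the heat equation (Lizorkin, Krylov), which immediately gives $\|\Delta u\|_{\LL^q_p}\les \|f\|_{\LL^q_p}$ for every $(p,q)\in(1,\infty)^2$ and hence the conclusion.
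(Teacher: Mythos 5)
Your proposal is correct, and its core identification coincides with the paper's: writing $D$ as a Fourier multiplier operator with parabolic symbol $|\xi|^2/(i\tau+\tfrac12|\xi|^2)$ (the paper's $c\,x^2/(x^2+\mathrm{i}x_0)$). Where you diverge is in how the mixed-norm boundedness is then concluded. The paper stays at the level of the symbol and cites Lizorkin's Marcinkiewicz-type multiplier theorem for mixed norms, which requires only the directly checkable bound $|x_{j_1}\cdots x_{j_k}\partial^k_{x_{j_1}\cdots x_{j_k}}\Phi|\le M$ for purely mixed derivatives up to order $d+1$ — a one-step citation plus an elementary verification. You instead recast $D(f)=\Delta u$ with $u=p*f$ the heat potential and reduce the lemma to parabolic maximal regularity $\|\Delta u\|_{\LL^q_p}\les\|f\|_{\LL^q_p}$, which you obtain either by the parabolic Calder\'on--Zygmund theorem on the diagonal $p=q$ followed by a vector-valued (Benedek--Calder\'on--Panzone type) extrapolation in the time variable with values in $L^p(\Rd)$ — using $\|\Delta e^{t\Delta/2}\|_{L^p\to L^p}\les t^{-1}$ and the analyticity of the heat semigroup for the operator-valued H\"ormander condition — or by citing the classical Krylov/Lizorkin mixed-norm maximal regularity outright. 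Both routes are valid; yours is more modular and aligns with the standard $L^q(L^p)$-theory in the SPDE literature, while the paper's is shorter and self-contained modulo a single multiplier theorem. The only minor caveat in your write-up is that the interchange $(\partial_i p)*(\partial_i f)=\partial_i^2(p*f)$ and the vanishing-at-$-\infty$ normalization should be justified on a dense class (e.g.\ Schwartz $f$, noting $\|\nabla p_s\|_{L^1}\sim s^{-1/2}$ is locally integrable), which is how the operator is meant to be interpreted anyway.
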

\begin{proof}
    The operator $D$ is associated to the Fourier multiplier
    \begin{align*}
        \Phi(x_0,x)=c\frac{x^2}{x^2+\mathrm{i} x_0}, \quad x^2=x_1^2+...+x_d^2,
    \end{align*}
    for some absolute constant $c$, where $\mathrm{i}$ is the imaginary unit.
    A result from \cite[Corollary 1, pg. 234]{MR0262815} asserts that $D$ is a bounded operator on $\LL^q_p$ provided that $\Phi$ admits continuous (for $x_0,...,x_d\neq 0$) purely mixed derivatives of orders $k\le d+1$ such that
    \begin{align*}
        \left|x_{j_1}...x_{j_k}\partial^k_{x_{j_1}...x_{j_k}} \Phi \right|\le M 
    \end{align*}    
    for all distinct $j_1,...,j_k$ and for some finite constant $M$. Such condition can be verified directly. 
\end{proof}
\begin{lemma}[Quantitative John--Nirenberg inequality]\label{lem.vmo}
    Let $V$ be an adapted continuous process, $w$ be a deterministic control and $\alpha\in(0,1)$. Assume that 
    \begin{align*}
        \|\E_s|V_t-V_s|\|_{L^\infty(\Omega)}\le w(s,t)^\alpha \quad \forall (s,t)\in\simt.
    \end{align*}
    Then, there exists a finite constant $C_\alpha>0$ such that for every $(s,t)\in\simt$ and every $m\ge1$,
    \begin{align*}
        \|\sup_{u\in[s,t]}|V_u-V_s|\|_{L^m(\Omega)}\le C_\alpha \Gamma(m(1- \alpha)+1)^\frac{1}{m} w(s,t)^\alpha.
    \end{align*}
\end{lemma}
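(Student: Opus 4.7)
The plan is to establish a stretched-exponential tail estimate for $Z:=\sup_{u\in[s,t]}|V_u-V_s|$ and then extract the $L^m$ bound via the layer-cake formula; the Gamma function will emerge naturally from integrating a power against a stretched exponential.

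For the tail estimate I will run a John--Nirenberg-type stopping-time iteration. Fix $L>0$ and set $\tau_0:=s$ and $\tau_{k+1}:=\inf\{u\geq \tau_k:|V_u-V_{\tau_k}|\geq L\}\wedge t$. By continuity of $V$, on $\{\tau_{k+1}<t\}$ the increment equals $L$ exactly, and the conditional Markov inequality together with the hypothesis (extended to stopping times via a standard dyadic approximation and optional sampling) yields
\[
\mP(\tau_{k+1}<t\mid\cff_{\tau_k})\leq L^{-1}\E[|V_{\tau_{k+1}}-V_{\tau_k}|\mid\cff_{\tau_k}]\leq L^{-1}w(\tau_k,\tau_{k+1})^\alpha.
\]
Iterating $n$ times, taking full expectation, and combining the super-additivity $\sum_k w(\tau_k,\tau_{k+1})\leq w(s,t)$ with AM--GM,
\[
\mP(\tau_n<t)\leq L^{-n}\E\Bigl[\prod_{k=0}^{n-1}w(\tau_k,\tau_{k+1})^\alpha\Bigr]\leq L^{-n}\bigl(w(s,t)/n\bigr)^{n\alpha}.
\]
Since $Z\leq nL$ on $\{\tau_n\geq t\}$ by the chaining $|V_r-V_s|\leq\sum_{j<k}|V_{\tau_{j+1}}-V_{\tau_j}|+|V_r-V_{\tau_k}|$ for $r\in[\tau_k,\tau_{k+1})$, setting $\lambda=nL$ and optimizing over $n$ (taking $n\approx e^{-1}(\lambda/w(s,t)^\alpha)^{1/(1-\alpha)}$) produces
\[
\mP(Z>\lambda)\leq \exp\!\bigl(1-c_\alpha(\lambda/w(s,t)^\alpha)^{1/(1-\alpha)}\bigr),\qquad c_\alpha=(1-\alpha)/e.
\]

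Substituting this tail bound into $\E[Z^m]=m\int_0^\infty\lambda^{m-1}\mP(Z>\lambda)\dd\lambda$ and performing the change of variable $u=c_\alpha(\lambda/w(s,t)^\alpha)^{1/(1-\alpha)}$ reduces the integral to $w(s,t)^{\alpha m}$ times the Gamma integral, producing the factor $\Gamma(m(1-\alpha)+1)$ up to $\alpha$-dependent constants. The constant $C_\alpha$ in the statement then absorbs the prefactor $e\cdot m\cdot (1-\alpha)\cdot c_\alpha^{-(1-\alpha)m}$ after taking the $m$-th root.

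The main obstacle is extracting the sharp stretched exponent $1/(1-\alpha)$ in the tail. A naive single-threshold John--Nirenberg argument would give only an exponential tail and hence suboptimal $\Gamma(m+1)$ moments. The improvement is driven by the super-additivity of $w$: after $n$ iterations, the geometric mean of the random increments $w(\tau_k,\tau_{k+1})$ is at most $w(s,t)/n$ by AM--GM, producing the crucial factor $n^{-n\alpha}$ which, when balanced against $L^{-n}$ via the constraint $nL=\lambda$, yields precisely the exponent $1/(1-\alpha)$. A secondary but routine point is justifying the use of the BMO-type hypothesis at stopping times, which is handled by approximation with dyadic stopping times and the continuity of $V$.
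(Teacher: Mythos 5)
Your overall target (a stretched-exponential tail with exponent $1/(1-\alpha)$, then layer-cake to get $\Gamma(m(1-\alpha)+1)$) is the right shape, but the core step of your iteration is not justified and is in fact false in general. The hypothesis, even after extension to stopping times by dyadic approximation and optional sampling (which is exactly what the paper does before citing \cite[Corollary 3.5]{Le2022}), only yields $\E[\,|V_\tau-V_\sigma|\mid\cff_\sigma]\le w(u,v)^\alpha$ for stopping times $\sigma\le\tau$ confined to a \emph{deterministic} interval $[u,v]\subseteq[s,t]$. Your per-step bound $\E[\,|V_{\tau_{k+1}}-V_{\tau_k}|\mid\cff_{\tau_k}]\le w(\tau_k,\tau_{k+1})^\alpha$ replaces the deterministic endpoints by the random exit time $\tau_{k+1}$, which is not $\cff_{\tau_k}$-measurable; the left side is an average over all future scenarios while the right side is small precisely on the scenarios where the exit happens quickly, so the claimed a.s.\ inequality fails. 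Concretely, take $V=W$ a Brownian motion, $w(s,t)=c(t-s)$, $\alpha=\tfrac12$, $\tau_k=s$, $\tau_{k+1}$ the exit time of $W-W_s$ from a ball of radius $L$: the left side is a fixed positive constant, whereas $w(s,\tau_{k+1})^{1/2}$ is arbitrarily small on an event of positive probability. With the only available bound $w(s,t)^\alpha$ per step, your scheme gives $\mP(\tau_n<t)\le (w(s,t)^\alpha/L)^n$, i.e.\ merely exponential tails and the suboptimal $\Gamma(m+1)$ growth you explicitly set out to beat; the factor $(w(s,t)/n)^{n\alpha}$, which is the entire source of the improvement, is exactly what is missing. (Even granting the per-step bound, the passage to $\mP(\tau_n<t)\le L^{-n}\E[\prod_k w(\tau_k,\tau_{k+1})^\alpha]$ is not a tower-property computation, since after one conditioning the factor $w(\tau_{n-1},\tau_n)^\alpha$ is not measurable with respect to the earlier $\sigma$-algebras.)

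For comparison: the paper's proof is essentially a reduction to \cite[Corollary 3.5]{Le2022} after the stopping-time extension, so any self-contained argument must rebuild that input. A repair of your strategy that does work is to make the control splitting deterministic rather than random: by superadditivity and continuity of $w$, choose $s=t_0\le\dots\le t_n=t$ with $w(t_{i-1},t_i)\le w(s,t)/n$; on each piece, your single-threshold exit-time iteration (now legitimate, with the deterministic bound $(w(s,t)/n)^\alpha$) gives \emph{conditional} exponential tails for $Z_i:=\sup_{u\in[t_{i-1},t_i]}|V_u-V_{t_{i-1}}|$ at scale $(w(s,t)/n)^\alpha$; then $Z\le\sum_i Z_i$, a Chernoff/conditional-exponential-moment bound gives $\mP(Z>\lambda)\le C^n\exp\bigl(-c\lambda n^\alpha/w(s,t)^\alpha\bigr)$, and optimizing $n\sim(\lambda/w(s,t)^\alpha)^{1/(1-\alpha)}$ produces the stretched-exponential tail, after which your layer-cake computation of the $\Gamma(m(1-\alpha)+1)$ factor goes through as written.
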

\begin{proof}
    Without the growth constant $\Gamma(m(1- \alpha)+1)^\frac{1}{m}$, %
    this result is a direct consequence of \cite[Exercise A.3.2]{MR2190038}. To obtain the precise growth constant, we follow \cite{Le2022} closely.
 Continuity and the assumption imply that
    \begin{align*}
         \| \E_\sigma|V_\tau-V_\sigma|\|_{L^\infty(\Omega)}\le w(s,t)^\alpha
    \end{align*}
    for every $(s,t)\in\simt$ and all stopping times $\sigma, \tau$ satisfying $s\le \sigma\le \tau\le t$. The class of all processes with such property is denoted by $\mathrm{VMO}^{(1/\alpha)-\var}$ in \cite[Section 3]{Le2022}. We then apply \cite[Corollary 3.5]{Le2022} to obtain the desired estimate.
\end{proof}
\begin{proof}[\textbf{Proof of \cref{prop.davie}}]
In view of \cref{lem.vmo}, it suffices to estimate the conditional second moment.
Define for each $(s,t)\in[0,T]_\leq^2$,
\begin{align}
    V_t\coloneqq\int_0^t\nabla f(r,W_r)\dd r,  \label{def:V}
    \tand I_{s,t}\coloneqq\frac12\mE_s\big|V_t-V_s\big|^2.
\end{align}
Using integration by parts, tower property of conditional expectation and Fubini's Theorem we have, almost surely,
\begin{align*} 
    I_{s,t}&=\mE_s\int_s^t\int^t_{r_2}\mE_{r_2}[\nabla f(r_1,W_{r_1})] \cdot \nabla f(r_2,W_{r_2}) \dd r_1\dd r_2
  \\&=-\mE_s\int_s^t\int^t_{r_2}\big[\int_{\mR^d}f(r_1,y+W_{r_2})\nabla  p_{r_1-r_2}(y)\dd y\big]\cdot  \nabla f(r_2,W_{r_2})\dd r_1\dd r_2 
  \\&=-\int_s^t\int^t_{r_2}\int_{\mR^d}\big[\int_{\mR^d}f(r_1,y+z+W_s)\nabla  p_{r_1-r_2}(y)\dd y\big] \cdot\nabla f(r_2,z+W_s) p_{r_2-s}(z)\dd z\dd r_1\dd r_2 
  \\&=:I_1+I_2,
\end{align*}
where
\begin{align*}
    I_1&=\int_s^t\int^t_{r_2}\int_{\mR^d}\big[\int_{\mR^d}f(r_1,y+z+W_s)\nabla  p_{r_1-r_2}(y)\dd y\big] \cdot f(r_2,z+W_s)\nabla  p_{r_2-s}(z)\dd z\dd r_1\dd r_2 ,
    \\I_2&=\int_s^t\int^t_{r_2}\int_{\mR^d}\big[\int_{\mR^d}\nabla f(r_1,y+z+W_s)\cdot \nabla p_{r_1-r_2}(y)\dd y\big]  f(r_2,z+W_s)p_{r_2-s}(z)\dd z\dd r_1\dd r_2.
\end{align*}

To estimate $I_1$, we apply H\"older's  inequality (below $p'$ satisfies $\frac{1}{p}+\frac{1}{p'}=1$) to see that for any $v\in\mR^d$, 
\begin{align*}
&\int_s^t\int^t_{r_2}\int_{\mR^d}\big[\int_{\mR^d}f(r_1,y+z+v)\nabla  p_{r_1-r_2}(y)\dd y\big]\cdot  f(r_2,z+v)\nabla  p_{r_2-s}(z)\dd z\dd r_1\dd r_2
	\\&\le \int_s^t\int^t_{r_2}\|f(r_1,z+v+\cdot)\|_{L^p(\mR^d)}\| \nabla  p_{r_1-r_2}\|_{L^{p'}(\mR^d)} \|f(r_2,v+\cdot)\|_{L^p(\mR^d)}\|\nabla  p_{r_2-s}\|_{L^{p'}(\mR^d)}\dd  r_1\dd r_2 
 \\&\lesssim\Vert f\Vert_{\LL_p^q([s,t])}\int_s^t\big(\int_{r_2}^t(r_1-r_2)^{(-\frac{1}{2}-\frac{d}{2p})\frac{q}{q-1}}\dd r_1\big)^{1-\frac{1}{q}}  \Vert f(r_2)\Vert_{L^{p}(\mR^d)}(r_2-s)^{-\frac{1}{2}-\frac{d}{2p}}\dd r_2
  \\&\lesssim\Vert f\Vert_{\LL_p^q([s,t])}\int_s^t (t-r_2)^{\frac{1}{2}-\frac{d}{2p}-\frac{1}{q}} \Vert f(r_2)\Vert_{L^{p}(\mR^d)}(r_2-s)^{-\frac{1}{2}-\frac{d}{2p}}\dd r_2 
 \\&\lesssim\Vert f\Vert_{\LL_p^q([s,t])}(t-s)^{\frac{1}{2}-\frac{d}{2p}-\frac{1}{q}}\int_s^t \Vert f(r_2)\Vert_{L^{p}(\mR^d)}(r_2-s)^{-\frac{1}{2}-\frac{d}{2p}}\dd r_2 
 \\&\lesssim\Vert f\Vert_{\LL_p^q([s,t])}(t-s)^{\frac{1}{2}-\frac{d}{2p}-\frac{1}{q}}\Vert f\Vert_{\LL_p^q([s,t])}(t-s)^{\frac{1}{2}-\frac{d}{2p}-\frac{1}{q}}
 \lesssim\Vert f\Vert_{\LL_p^q([s,t])}^2(t-s)^{1-\frac{2}{q}-\frac{d}{p}}.
\end{align*}
This yields that, almost surely,
\begin{align*}
   | I_1| 
   \lesssim\Vert f\Vert_{\LL_p^q([s,t])}^2(t-s)^{1-\frac{2}{q}-\frac{d}{p}}.
\end{align*}
To estimate $I_2$, we apply  H\"older inequality and \cref{lem.parRiesz} to see that
\begin{align*}
    |I_2|\lesssim \|f\|_{\LL^q_p([s,t])}\|\1_{[s,t]}f^{W_s}p^{s}\|_{\LL^{q'}_{p'}}
\end{align*}
where 
\begin{align*}
  f^{W_s}(r,z)=f(r,z+W_s),\quad p^s_r(z)=p_{r-s}(z),  
\end{align*}
and $p',q'$ denote the H\"older conjugates of $p,q$ respectively.
    Applying H\"older inequality again, we have
    \begin{align*}
        \|\1_{[s,t]}f^{W_s}p^{s}\|_{\LL^{q'}_{p'}}\le \|\1_{[s,t]}p^{s}\|_{\LL^{q/(q-2)}_{p/(p-2)}}\|f\|_{\LL^q_p([s,t])}.
    \end{align*}
    Using the fact that $(p,q)\in\cjj$ and some elementary calculations, we have
    \begin{align*}
       \|\1_{[s,t]}p^{s}\|_{\LL^{q/(q-2)}_{p/(p-2)}}=c(t-s)^{1-\frac dp-\frac2q} 
    \end{align*}
  for some constant $c$ depending on $p,q$. Combining with the previous estimates, we obtain that 
    \begin{align*}
        |I_{s,t}|\lesssim\Vert f\Vert_{\LL_p^q([s,t])}^2(t-s)^{1-\frac{2}{q}-\frac{d}{p}} \quad a.s.
    \end{align*}
Consequently, 
\begin{align*}
   \Vert\mE_s|V_t-V_s|\Vert_{L^\infty(\Omega)}\lesssim\Vert f\Vert_{\LL_p^q([s,t])}(t-s)^{\frac{1}{2}-\frac{1}{q}-\frac{d}{2p}}. 
\end{align*}
It is evident that $V$ is continuous. Applying \cref{lem.vmo}, we obtain \eqref{gradf}.

 Next we show \eqref{est:exp-diff}. By approximations, we can assume that $f$ has bounded continuous first derivatives. We observe that 
   \begin{align*}
       \int_s^tf(r,W_r+x)&-f(r,W_r+y)\dd r
      =  (x-y)\cdot\int_0^1\int_s^t\nabla f\big(r,W_r+\theta x+(1-\theta)y\big)\dd r\dd \theta.
   \end{align*}
  Then it follows from \eqref{gradf} that 
   \begin{align*}
       &\big\Vert\int_s^tf(r,W_r+x)-f(r,W_r+y)\dd r\big\Vert_{L^m(\Omega)}
       \\&\leq
\sup_{\theta\in[0,1]}\big\Vert\int_s^t\nabla f\big(r,W_r+\theta x+(1-\theta)y\big)\dd r\big\Vert_{L^m(\Omega)}|x-y|
       \\&\leq |x-y|\sup_{z\in\mR^d}\big\Vert\int_s^t\nabla f(r,W_r+z)\dd r\big\Vert_{L^m(\Omega)}
       \\&\les \Gamma\left(m(\frac12+\frac d{2p})+1\right)^\frac{1}{m} \|f\|_{\LL^q_p([s,t])}(t-s)^{\frac12-\frac1q-\frac d{2p}}|x-y|.
   \end{align*} %
   The proof is completed.
\end{proof}

\section{Path-by-path estimates} %
\label{sec:path_estimates}
    Let $f$ be a measurable function on $\R\times\Rd$. If $\omega$ is such that  $\int_0^T |f(r,W_r(\omega))|\dr<\infty$ then the function $t\mapsto \int_0^t f(r,W_r(\omega))\dr$ is continuous. The following result extends this argument for the function $(t,x)\mapsto\int_0^t f(r,W_r(\omega)+x)\dr$. The integrability condition  $\int_0^T |f(r,W_r(\omega)+x)|\dr<\infty$ is no longer sufficient and one has to replace it with the quantity $\Xi_{T,R}(f)(\omega)$ defined in \cref{lem.regularizing} below. We first prove a simple embedding.
    \begin{lemma}\label{lem.grr}
        Let $T>0$, $R>0$, $m>d$, $\alpha\in (\frac{1}{m},1 )$ and $\beta\in(\frac{d}{m} ,1)$.
        Then there exists a finite constant $C>0$ independent from $T,R$ such that for any continuous function $g:[0,T]\times B_R\to \R$, one has
            \begin{multline}\label{est.grr}
                \sup_{(s,t,x,y)\in[0,T]^2\times B_R^2}\frac{|\delta g_{s,t}(x)-\delta g_{s,t}(y)|^m }{|t-s|^{m\alpha-1}|x-y|^{m\beta-d}}
                \\\le C\iiiint_{[0,T]^2\times B_R^2} \frac{|\delta g_{\bar s,\bar t}(\bar x)-\delta g_{\bar s,\bar t}(\bar y)|^m }{|\bar t-\bar s|^{m\alpha+1}|\bar x-\bar y|^{m\beta+d}}  \dd\bar s \dd\bar  t  \dd\bar  x \dd\bar  y.
            \end{multline}
    \end{lemma}
    
    \begin{proof}
        Note that the inequality is invariant under scaling, hence we can assume that $T=R=1$. 
        First apply Sobolev embedding in $\Rd$, we see that the left-hand side of \eqref{est.grr} is at most a constant multiple of 
        \begin{align*}
            \iint_{ B_1^2} \frac{|\delta g_{ s, t}(\bar x)-\delta g_{ s, t}(\bar y)|^m }{| t- s|^{m\alpha+1}|\bar x-\bar y|^{m\beta+d}}  \dd\bar  x \dd\bar  y.
        \end{align*}
        We note that $\delta g_{ s, t}(\bar x)-\delta g_{ s, t}(\bar y)= \delta h_{s,t}$, where $h_t= g_{ t}(\bar x)- g_{ t}(\bar y)$. Thus, we can apply the Sobolev embedding in $\R$ to the function $h$ to see that the integral above is bounded by the right-hand side of \eqref{est.grr}.
    \end{proof}
    \begin{lemma}\label{lem.regularizing}
        Let $R>0$, $\alpha\in(0,\frac{1}{2}-\frac{d}{2p}-\frac{1}{q})$ for some $(p,q)\in\cjj$ and $\varepsilon \in (0,1)$. There exist a function $\Xi_{T,R,\alpha,\varepsilon}=\Xi_{T,R}:\LL^q_p\to L^1(\Omega)$ and a finite constant $C=C(\alpha,\varepsilon)$ such that
        \begin{enumerate}[(i)]
            \item \label{(i)}$\Xi_{T,R}$ satisfies the triangle inequality, i.e. for every $f,g\in \LL^q_p$, one has that  $\Xi_{T,R}(f+g)\le \Xi_{T,R}(f)+\Xi_{T,R}(g)$ a.s.;
             \item\label{it.xif} for every $f\in\LL^q_p$,
            \begin{align}
                \E[\Xi_{T,R}(f)]\les \|f\|_{\LL^q_p([0,T])};
                \label{est.Xif}
            \end{align}
            \item \label{(ii)}for every bounded measurable function $f$ in $\LL^q_p$, there exists an event $\Omega^\prime_{f,T,R}$ of full measure such that for every $\omega\in \Omega^\prime_{f,T,R}$, $\Xi_{T,R}(f)(\omega)$ is finite; for every  $(s,t)\in\simt$ and every $x,y\in B_R$, 
            \begin{align}\label{est.asLip}
                \Big|\int_s^t [f(r,W_r(\omega)+x)-f(r,W_r(\omega)+y)]\dd r\Big|&\le C\Xi_{T,R}(f)(\omega)|x-y|^{1- \varepsilon}(t-s)^{\alpha},
                \\\Big|\int_s^t f(r,W_r(\omega))\dd r\Big|&\le C\Xi_{T,R}(f)(\omega)(t-s)^\alpha.
                \label{est.asbounded}
            \end{align}
        \end{enumerate}
     \end{lemma}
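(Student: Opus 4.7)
The plan is to realise $\Xi_{T,K}(f)$ as a suitable random joint-Hölder seminorm of the field
\[
\Psi_f(s,t,x) := \int_s^t f(r,W_r+x)\dd r,\qquad (s,t,x)\in\simt\times K,
\]
first for bounded $f$, and then to extend it to the whole of $\LL_p^q$ by a subadditive infimum device.

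The first ingredient is a pair of moment estimates. Estimate \eqref{est:exp-diff} of \cref{prop.davie} already provides, for every bounded measurable $f$, every $m\ge 1$, and every $x,y\in\Rd$,
\[
\Big\Vert\int_s^t\bigl[f(r,W_r+x)-f(r,W_r+y)\bigr]\dd r\Big\Vert_{L^m(\Omega)}\lesssim_m\|f\|_{\LL_p^q([s,t])}(t-s)^{\frac12-\frac1q-\frac d{2p}}|x-y|.
\]
A companion bound for $\int_s^t f(r,W_r+x)\dd r$ itself is obtained by an entirely parallel computation (following the proof of \cref{prop.davie} but with $f$ in place of $\nabla f$, so no integration-by-parts step is needed): two applications of Young's convolution inequality to the conditional second moment, followed by \cref{lem.vmo}, yield
\[
\Big\Vert\int_s^t f(r,W_r+x)\dd r\Big\Vert_{L^m(\Omega)}\lesssim_m\|f\|_{\LL_p^q([s,t])}(t-s)^{1-\frac1q-\frac d{2p}},
\]
uniformly in $x\in\Rd$ by translation invariance. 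Regarding $\Psi_f$ as a random field on $\simt\times K\subset\R^{2+d}$, these two inequalities control all its increments (one first trades the Lipschitz bound in $x$ for $|x-y|^{1-\varepsilon/2}$ via $|x-y|\le\mathrm{diam}(K)^{\varepsilon/2}|x-y|^{1-\varepsilon/2}$), and a multi-parameter Kolmogorov continuity theorem, or equivalently the Garsia--Rodemich--Rumsey inequality, applied with $m$ large enough to absorb the dimensional losses $2/m$ in time and $d/m$ in space, produces a continuous modification of $\Psi_f$ whose joint Hölder seminorm
\[
M(f)(\omega) := \sup_{\substack{(s,t)\in\simt\\ x\neq y\in K}}\frac{|\Psi_f(s,t,x,\omega)-\Psi_f(s,t,y,\omega)|}{|x-y|^{1-\varepsilon}(t-s)^{\alpha}}+\sup_{\substack{(s,t)\in\simt\\ x\in K}}\frac{|\Psi_f(s,t,x,\omega)|}{(t-s)^{\alpha}}
\]
is almost surely finite and satisfies $\|M(f)\|_{L^m(\Omega)}\lesssim\|f\|_{\LL_p^q}$.

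For bounded measurable $f$, dominated convergence shows that $(s,t,x)\mapsto\int_s^t f(r,W_r+x)\dd r$ is already continuous pathwise, hence coincides with the Kolmogorov modification, so \eqref{est.asLip}--\eqref{est.asbounded} hold pathwise with $\Xi_{T,K}(f):=M(f)$ on a full-measure event. To define $\Xi_{T,K}$ consistently on the whole of $\LL_p^q$ whilst preserving subadditivity and the $L^1$-bound, we then set
\[
\Xi_{T,K}(f) := \inf\Bigl\{\sum_{n\ge 1} M(h_n)\,:\,(h_n)\text{ bounded},\ \sum_{n\ge 1}\|h_n\|_{\LL_p^q}<\infty,\ f=\sum_{n\ge 1} h_n\text{ in }\LL_p^q\Bigr\}.
\]
The infimum formulation yields \ref{(i)} immediately (concatenate competing decompositions of $f$ and $g$); the moment bound on $M$ gives \ref{it.xif} after choosing a decomposition with a geometrically decaying tail $\sum_n\|h_n\|_{\LL_p^q}\lesssim\|f\|_{\LL_p^q}$; and for bounded $f$ the trivial decomposition $h_1=f$, $h_n=0$ for $n\ge 2$ shows $\Xi_{T,K}(f)\le M(f)$, while subadditivity of $M$ applied to any admissible decomposition gives $M(f)\le \sum_n M(h_n)$ and hence $M(f)\le \Xi_{T,K}(f)$, so $\Xi_{T,K}(f)=M(f)$ and \ref{(ii)} follows.

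The main obstacle is the multi-parameter Kolmogorov / Garsia--Rodemich--Rumsey step, where one must keep track of two different natural regularities (Hölder of order $1-\tfrac1q-\tfrac d{2p}$ in the two time variables, Lipschitz in the spatial variable) and lose dimensional factors $2/m$ and $d/m$ respectively, choosing $m$ so that the resulting pathwise exponents still exceed the target $\alpha<\tfrac12-\tfrac1q-\tfrac d{2p}$ in time and $1-\varepsilon$ in space. A secondary, bookkeeping, nuisance is ensuring that the extension of $\Xi_{T,K}$ from bounded to general $\LL_p^q$ elements respects \ref{(i)}--\ref{(ii)} simultaneously; the infimum device above handles this cleanly.
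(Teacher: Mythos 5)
Your moment estimates and the multiparameter Garsia--Rodemich--Rumsey step are sound and essentially parallel to the paper, which uses \eqref{est:exp-diff} together with a Krylov-type bound (\eqref{tmp.kryl}) and defines $\Xi_{T,K}(f)$ directly as the GRR integral functional of the two difference-quotient fields; that explicit definition is automatically subadditive and defined on all of $\LL^q_p$, so no infimum-over-decompositions device is needed. The genuine gap in your proposal is the sentence claiming that for bounded \emph{measurable} $f$ ``dominated convergence shows that $(s,t,x)\mapsto\int_s^t f(r,W_r+x)\dd r$ is already continuous pathwise, hence coincides with the Kolmogorov modification''. Dominated convergence gives continuity in $(s,t)$ only; continuity in $x$ would require $f(r,W_r(\omega)+x_n)\to f(r,W_r(\omega)+x)$ for a.e.\ $r$, which fails for merely measurable $f$, and no soft translation-continuity argument is available since for $d\ge 2$ the occupation measure of the Brownian path is singular with respect to Lebesgue measure. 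As a result, your $M(f)$ only controls a continuous modification of the averaged field, whereas item \ref{(ii)} must hold for the genuine Lebesgue integrals at \emph{every} $x,y\in K$ on a single full-measure event; this pointwise (not modification-level) statement is exactly what is used later, e.g.\ when the estimates are evaluated along $\psi=Y-W$ for an arbitrary solution $Y$. Closing this gap is the actual content of the paper's proof: it first establishes \eqref{est.asLip}--\eqref{est.asbounded} for continuous bounded $f$, then for $f=\1_U$ with $U$ open via Urysohn (increasing continuous approximations and monotone limits), and finally for general bounded measurable $f$ via Lusin's theorem, writing $|f-f^{n}|\le 2M\1_{U^{n}}$ with $|U^{n}|\le 2^{-n}$, bounding the error term pointwise in $(s,t,x,y)$ by $4M\,\Xi_{T,K}(\1_{U^{n}})(t-s)^{\alpha}$, and using that $\Xi_{T,K}(\1_{U^{n}})\to 0$ and $\Xi_{T,K}(f^{n})\to\Xi_{T,K}(f)$ a.s.\ along a subsequence.

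A secondary, related issue: even granting the bounded case, your identification $\Xi_{T,K}(f)=M(f)$ needs $M(f)\le\sum_n M(h_n)$ for an arbitrary admissible decomposition $f=\sum_n h_n$, which implicitly uses the a.s.\ identity $\int_s^t f(r,W_r+x)\dd r=\sum_n\int_s^t h_n(r,W_r+x)\dd r$ simultaneously for all $(s,t,x)$; justifying this again requires an approximation argument of the above type (a null set uniform in the parameters), not formal linearity, and moreover the measurability of your sup-defined $M(f)$ is itself contingent on the unproven continuity. Both complications disappear with the paper's explicit integral definition of $\Xi_{T,K}$.
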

    \begin{proof}
        We choose and fix $m\ge1$ such that 
        \begin{align}\label{cond.m1}
            \frac{1}{m}< \frac{1}{2}-\frac{d}{2p}-\frac{1}{q}-\alpha \tand \frac{d}{m}<\varepsilon.
        \end{align} 
        For each function $f\in\LL^q_p$, we define
        \begin{multline}\label{def.Xi}
            \Xi_{T,R}(f)=\Big(   \iint_{[0,T]^2}\frac{\Big|\int_s^tf(r,W_r)\dd r\Big|^m}{|t-s|^{\alpha m+2}}\dd s\dd t
            \\+\iiiint_{[0,T]^2\times B_R^2}\frac{\Big|\int_s^t[f(r,W_r+x)-f(r,W_r+y)]\dd r\Big|^m}{|t-s|^{\alpha m+2}|x-y|^{(1-\varepsilon)m+2d}}\dd s\dd t\dd x\dd y \Big)^{1/m}.
        \end{multline}
        It is obvious that \ref{(i)} holds.
        Using \eqref{est:exp-diff} and the following estimate from \cite[Lemma 4.5]{LL} 
         \begin{align}\label{tmp.kryl}
             \mE\Big(\int_s^t f(r,W_r)\dd r\Big)^m\les  \Vert f\Vert_{\LL_p^q([s,t])}^m(t-s)^{m(1-\frac{d}{p}-\frac{2}{q})},
         \end{align} 
        we see that \ref{it.xif} holds.  
        It remains to show that \ref{(ii)} holds.
        Assume first that $f$ is bounded continuous so that the map $(t,x)\mapsto\int_0^t f(r,W_r(\omega)+x)\dr$ is continuous for each $\omega$ inside an event $\Omega^\prime$ of full measure (for instance, choosing $\Omega^\prime$ so that $t\mapsto W_t(\omega)$ is measurable for $\omega \in \Omega^\prime$). 
        Define $\Omega^\prime_{f,T,R}\coloneqq\Omega^\prime\cap \{\omega:\Xi_{T,R}(f)(\omega)<\infty\}$ which is an event of full measure.
        From \eqref{est:exp-diff} and \eqref{tmp.kryl}, applying \cref{lem.grr} and Sobolev embedding in $\R$ (with $\gamma_1=\alpha+\frac{1}{m} $ and $\gamma_2=1-\varepsilon+\frac{d}{m}$), 
        there exists a constant $C$  such that \eqref{est.asLip} and \eqref{est.asbounded} hold.

        Below, we remove the continuity assumption  on $f$.

        \textit{Step 1.} We show \ref{(ii)} for $f=\1_U$ where $U$ is an open set of finite measure in $\R\times\Rd$. By Urysohn lemma, there exists a sequence of increasing continuous functions $f^n$ converging pointwise to $f$. From \ref{(i)} and \ref{it.xif}, we can choose a further subsequence, still denoted by $(f^n)$ such that $\lim_n\Xi_{T,R}(f^n)=\Xi_{T,R}(f)$ a.s., w.l.o.g. on $\Omega^\prime_{f,T,R}$.
        We then apply \eqref{est.asLip} and \eqref{est.asbounded} for $f^n$ and take limit in $n$ to obtain \ref{(ii)}.

        \textit{Step 2.} We show \ref{(ii)} for a general bounded measurable $f$ in $\LL^q_p$. Let $M$ be a constant such that $|f|\le M$.
        By Lusin theorem, for any $n \in \mathbb{N}$, there exists a continuous function $f^{n}$ and an open set $U^{n}$ whose Lebesgue measure is not more than $2^{-n}$ such that $|f-f^{n}|\le 2M \1_{U^{n}}$.
        This implies that $\lim_{n}f^{n}=f$ in $\LL^q_p$. Using \ref{it.xif}, we can choose subsequences, still denoted by $n$ such that a.s., again w.l.o.g. on $\Omega^\prime_{f,T,R}$,
        \begin{align}\label{tmp.limitxi}
            \lim_n\Xi_{T,R}(\1_{U^{n}})=0
            \tand \lim_n \Xi_{T,R}(f^{n})=\Xi_{T,R}(f)
            .
        \end{align}
        Let  $\omega\in \Omega^\prime_{f,T,R}$ and define 
        \begin{align*}
            A[f](\omega) =\big|\int_s^t [f(r,W_r(\omega)+x)-f(r,W_r(\omega)+y)]\dd r\big|.
        \end{align*}
        Then
        \begin{align}\label{tmp.ffn}
            A[f](\omega)\le A[f^{n}](\omega)+A[f-f^{n}](\omega).
        \end{align}
        Applying \eqref{est.asLip} for the continuous function $f^{n}$, we have 
        \begin{align*}
            A[f^{n}](\omega)\le \Xi_{T,R}(f^{n})(\omega)|x-y|^{1- \varepsilon}(t-s)^\alpha \quad \forall (s,t,x,y)\in\simt\times B_R^2.
        \end{align*}     
        Applying \eqref{est.asLip} and \eqref{est.asbounded} for $\1_{U^{n}}$, we have 
        \begin{align*}
            A[f-f^{n}](\omega)
            &\le 2M\int_s^t[\1_{U^{n}}(r,W_r(\omega)+x)+\1_{U^{n}}(r,W_r(\omega)+y)]\dr
            \\&\le 4M \Xi_{T,R}(\omega)(\1_{U^{n}})(t-s)^\alpha \quad  \forall (s,t,x,y)\in\simt\times B_R^2.
        \end{align*}
        Hence, from \eqref{tmp.ffn}, taking limit in $n$ and using \eqref{tmp.limitxi}, we see that \eqref{est.asLip} holds. The estimate \eqref{est.asbounded} is obtained in a similar way.          
     \end{proof}
     \begin{remark}
         Via a truncation procedure as is done in the last step in the proof of \cref{lem.nonlinYoung} below, one can remove the assumption of boundedness in \cref{lem.regularizing}\ref{(ii)}. The current formulation is sufficient for our purpose.
     \end{remark}
    The following result provides an alternative perspective to  \cite[Lemmas 3.3 and 3.4]{Davie} and \cite[Lemmas 3.3 and 3.4]{AVS}.
    \begin{lemma}\label{lem.nonlinYoung}
        Let  $f:[0,T]\times\Rd \to \mathbb{R}$ be a function in $\LL^q_p$ for some $(p,q)\in\cjj$, $R>0$ and $\alpha \in (0,\frac{1}{2}-\frac{d}{2p}-\frac{1}{q})$. Then there exists an event $\Omega^\prime_{f,T,R}$ with full probability such that for every $\omega\in \Omega^\prime_{f,T,R}$,
        every  $\varepsilon\in(0,\alpha)$, there is a deterministic constant $C=C(\varepsilon,\alpha)$ such that for every $(s,t) \in \simpl{0}{T}$, every function  $\psi:[0,T]\to B_R$  of finite variation,
        we have $\int_{0}^{T}|f(r,W_r(\omega)+\psi_r)|\dd r<\infty$, 
        \begin{gather}\label{est.young}
            \Big|\int_s^t f(r,W_r(\omega)+\psi_r)\dd r-\int_s^t f(r,W_r(\omega)+\psi_s)\dd r\Big|\le C \Xi_{T,R}(f)(\omega)[\psi] _{\var;[s,t]}^{1- \varepsilon}(t-s)^{\alpha}
            \\\shortintertext{and}
            \Big|\int_s^t f(r,W_r(\omega)+\psi_r)\dd r\Big|\le C \Xi_{T,R}(f)(\omega)(1+|\psi_s|^{1-\varepsilon}+[\psi] _{\var;[s,t]}^{1- \varepsilon})(t-s)^{\alpha}.\label{est.intfome}
        \end{gather}
    \end{lemma}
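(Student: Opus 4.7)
The plan is to realize the integral $\int_s^t f(r,W_r(\omega)+\psi_r)\,dr$ as the output of the sewing lemma applied to a natural germ, identify this output with the pathwise Lebesgue integral, and then remove any boundedness assumption by truncation. Throughout I will work on the event $\Omega_{f,T,K}$ provided by \cref{lem.regularizing}, which depends only on $f,T,K$ (and in particular is independent of $\psi$).

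\textbf{Step 1: Germ and defect.} Assume first that $f$ is bounded. Introduce the germ
\begin{align*}
    A_{s,t}(\omega):=\int_s^t f(r,W_r(\omega)+\psi_s)\,dr, \qquad (s,t)\in\simt.
\end{align*}
From \eqref{est.asbounded} we already have $|A_{s,t}|\le \Xi_{T,K}(f)(\omega)(t-s)^\alpha$. Its Chen defect $\delta A_{s,u,t}:=A_{s,t}-A_{s,u}-A_{u,t}$ is computed explicitly, and \eqref{est.asLip} applied on the sub-interval $[u,t]$ with $x=\psi_s$, $y=\psi_u$ yields
\begin{align*}
    |\delta A_{s,u,t}|=\Big|\int_u^t[f(r,W_r+\psi_s)-f(r,W_r+\psi_u)]\,dr\Big|\le \Xi_{T,K}(f)(\omega)\,[\psi]_{\var;[s,u]}^{1-\varepsilon}(t-u)^{\alpha}.
\end{align*}

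\textbf{Step 2: Sewing and identification.} Both $w_1(s,t):=[\psi]_{\var;[s,t]}$ and $w_2(s,t):=t-s$ are controls, so for $\theta:=(1-\varepsilon)/(1+\alpha-\varepsilon)\in(0,1)$ the product $w:=w_1^{\theta}w_2^{1-\theta}$ is also a control (a standard Hölder-type fact for partitions), and, with $\beta:=\alpha-\varepsilon>0$,
\begin{align*}
    |\delta A_{s,u,t}|\le \Xi_{T,K}(f)(\omega)\,w(s,t)^{1+\beta}.
\end{align*}
The sewing lemma then produces a unique additive continuous two-parameter function $\mathcal{I}_{s,t}$ with $\mathcal{I}_{s,s}=0$ and
\begin{align*}
    |\mathcal{I}_{s,t}-A_{s,t}|\le C(\alpha,\varepsilon)\,\Xi_{T,K}(f)(\omega)\,[\psi]_{\var;[s,t]}^{1-\varepsilon}(t-s)^{\alpha}.
\end{align*}
To identify $\mathcal{I}_{s,t}$ with $\Phi_{s,t}:=\int_s^t f(r,W_r+\psi_r)\,dr$, I will use that the sewing limit is realised by Riemann sums $\sum_{[u,v]\in\pi_n}A_{u,v}=\int_s^t f(r,W_r+\psi^{\pi_n}(r))\,dr$, where $\psi^{\pi_n}(r):=\psi_u$ on $[u,v)\in\pi_n$. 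As $\psi$ has finite variation it has at most countably many discontinuities, so $\psi^{\pi_n}(r)\to \psi_r$ for Lebesgue-a.e.\ $r$ along $|\pi_n|\to0$. Bounded convergence then gives $\mathcal{I}_{s,t}=\Phi_{s,t}$ when $f$ is bounded continuous in its spatial variable; for a generic bounded measurable $f$, I will approximate by continuous $f^k\to f$ in $\LL^q_p$ (Lusin) and use \cref{lem.regularizing}\ref{it.xif} together with the triangle inequality \ref{(i)} to pick a subsequence along which $\Xi_{T,K}(f-f^k)\to0$ a.s., then pass to the limit. With the identification in hand, \eqref{est.young} is just the sewing estimate, and \eqref{est.intfome} follows from $|\Phi_{s,t}|\le |A_{s,t}|+|\Phi_{s,t}-A_{s,t}|$.

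\textbf{Step 3: Removing boundedness.} For general $f\in\LL^q_p$, let $f^N:=f\mathbf{1}_{\{|f|\le N\}}$. Applied to $f^N-f^M$ (bounded) the above gives, a.s.,
\begin{align*}
    \Big|\int_s^t(f^N-f^M)(r,W_r+\psi_r)\,dr\Big|\le C\,\Xi_{T,K}(f^N-f^M)(\omega)(1+[\psi]_{\var;[s,t]}^{1-\varepsilon})(t-s)^{\alpha},
\end{align*}
and since $\|f^N-f^M\|_{\LL^q_p}\to 0$, \eqref{est.Xif} and property \ref{(i)} of $\Xi_{T,K}$ yield a subsequence along which the right-hand side tends to $0$ a.s. Thus $\int_s^t f^N(r,W_r+\psi_r)\,dr$ is Cauchy a.s.; applying the same argument to $|f^N|$ and using monotone convergence shows $\int_s^t |f|(r,W_r+\psi_r)\,dr<\infty$ a.s., so the Lebesgue integral in the statement is well defined, and passing to the limit in the bounds for $f^N$ delivers \eqref{est.young}--\eqref{est.intfome} for $f$. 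I expect the main obstacle to be the identification step, where the measurability (but not continuity) of $f$ in the spatial variable forces us to approximate carefully while keeping the event of full measure independent of $\psi$.
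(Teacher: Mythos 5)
Your overall architecture (freeze the spatial argument to get the germ $A_{s,t}=\int_s^t f(r,W_r+\psi_s)\,\dd r$, bound the defect via \eqref{est.asLip}, sew, identify the sewing output with the Lebesgue integral through Riemann sums for continuous $f$, then truncate for unbounded $f$) is exactly the paper's, and your Steps 1 and 3 are sound. The gap is in Step 2, at the passage from continuous to merely bounded measurable $f$. Choosing a subsequence with $\Xi_{T,K}(f-f^k)\to0$ a.s.\ only controls quantities with a \emph{frozen} spatial argument: via \eqref{est.asLip}--\eqref{est.asbounded} it gives smallness of $\sup_{x\in K}|\int_s^t (f-f^k)(r,W_r+x)\,\dd r|$, hence of the germs $A[f-f^k]$ and, by linearity of the sewing map, of $\mathcal I[f]-\mathcal I[f^k]$. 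It does \emph{not} control the error in the Lebesgue integrals of the composition, $\int_s^t (f-f^k)(r,W_r+\psi_r)\,\dd r$, which is precisely what you need to conclude $\Phi[f^k]\to\Phi[f]$ and hence $\mathcal I[f]=\Phi[f]$ together with \eqref{est.young}. Bounding that error by $C\,\Xi_{T,K}(f-f^k)(1+[\psi]_{\var;[s,t]}^{1-\varepsilon})(t-s)^\alpha$ would amount to invoking \eqref{est.intfome} for the non-continuous function $f-f^k$, i.e.\ the statement being proved; as written, your limit passage is circular.

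The paper closes this gap with a two-stage approximation that your sketch omits. First, \eqref{est.young}--\eqref{est.intfome} are proved for $f=\1_U$ with $U$ open of finite measure, using Urysohn: an increasing sequence of continuous functions $f^n\uparrow\1_U$ allows passing to the limit in the composition integral by \emph{monotone} convergence (it is the pointwise monotone convergence of the integrands, not $\LL^q_p$ convergence, that handles the moving argument $\psi_r$), while $\Xi_{T,K}(f^n)\to\Xi_{T,K}(\1_U)$ a.s.\ along a subsequence. Second, Lusin is used not merely to produce continuous $f^k\to f$ in $\LL^q_p$ but, crucially, to obtain the pointwise domination $|f-f^k|\le 2M\,\1_{U^k}$ with $|U^k|\le 2^{-k}$; then $|\int_s^t(f-f^k)(r,W_r+\psi_r)\,\dd r|\le 2M\int_s^t\1_{U^k}(r,W_r+\psi_r)\,\dd r$, and the right-hand side vanishes by the already-established indicator estimate combined with $\Xi_{T,K}(\1_{U^k})\to0$ a.s. If you insert this indicator step (keeping all subsequence choices, hence all null sets, dependent only on $f,T,K$ and not on $\psi$ or $(s,t)$), your proof coincides with the paper's; the remaining differences — your explicit interpolated control $w=w_1^{\theta}w_2^{1-\theta}$ and the a.e.-convergence justification of the Riemann-sum identification for continuous $f$ — are harmless elaborations of what the paper does implicitly.
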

    \begin{proof}
        Let $\Omega^\prime_{f,T,R}$ be as in \cref{lem.regularizing} and fix $\omega\in \Omega^\prime_{f,T,R}\cap \Omega^\prime_{|f|,T,R}$.

        \textit{Step 1.} We first show \eqref{est.young}  assuming that $f$ is a bounded continuous function.
        For $(s,t) \in \simpl{0}{T}$ define $A_{s,t}=\int_s^tf(r,W_r(\omega)+\psi_s)\dd r$ so that, for $ u \in [s,t]$,
        \begin{align*}
            \delta A_{s,u,t}\coloneqq A_{s,t}-A_{s,u}-A_{u,t} =\int_u^t[f(r,W_r(\omega)+\psi_s)-f(r,W_r(\omega)+\psi_u)]\dd r.
        \end{align*}
        Applying \eqref{est.asLip}, we have
        \begin{align*}
            |\delta A_{s,u,t}|\les \Xi_{T,R}(f)(\omega) |\psi_u-\psi_s|^{1- \varepsilon}(t-s)^{\alpha}
            \les \Xi_{T,R}(f)(\omega) [\psi]_{\var;[s,t]}^{1- \varepsilon} (t-s)^{\alpha}.
        \end{align*}
        That $\varepsilon<\alpha$ ensures that $1- \varepsilon+\alpha>1$.
        Since $f$ is continuous, we have for every $t\in[0,T]$,
        \begin{align*}
            \int_0^t f(r,W_r(\omega)+\psi_r)\dd r=\lim_{|\pi|\downarrow0}\sum_{[a,b]\in \pi}A_{a,b},
        \end{align*}
        where $\pi$ is any partition of $[0,t]$.
        We apply the sewing lemma \cite[Theorem 2.2 and Remark 2.3]{FH} formulated with controls to obtain \eqref{est.young}. 
        
        \textit{Step 2.} We show the result for any bounded measurable function $f$,  using similar arguments as in \cref{lem.regularizing}. 

        First, using Urysohn lemma, one can show that \eqref{est.young} holds for $f=\1_U$ for any open set $U$ of finite measure. 
        In particular, we have 
        \begin{align}\label{tmp.1u}
            \Big|\int_s^t \1_U(r,W_r(\omega)+ \psi_r)\dr\Big|\les \Xi_{T,R}(\1_U)(\omega)(1+R^{1-\varepsilon}+[\psi]_{\var;[s,t]}^{1- \varepsilon})(t-s)^\alpha 
        \end{align}
        for every $(s,t)\in\simt$ and every $\psi:[0,T]\to B_R$  of finite variation.
        
        Next, applying Lusin theorem, we can find for each integer $n\ge1$ a continuous bounded function
        $f^{n}$ and an open set $U^{n}$ whose Lebesgue measure is not more than $2^{-n}$ such that $|f-f^{n}|\le 2M \1_{U^{n}}$, where $M$ is a constant such that $|f|\le M$.
        This implies that $\lim_{n}f^{n}=f$ in $\LL^q_p$. For each $s\le t$, we put
        \begin{align*}
            J[f] =\int_s^t f(r,W_r(\omega)+\psi_r)\dd r-\int_s^t f(r,W_r(\omega)+\psi_s)\dd r
        \end{align*}
        so that $J[f]=J[f^{n}]+J[f-f^{n}]$.
        By the previous step, 
        \begin{align*}
            |J[f^{n}]|\le C \Xi_{T,R}(f^{n})(\omega)[\psi]_{\var;[s,t]}^{1- \varepsilon}(t-s)^\alpha.
        \end{align*}
        Using  \eqref{tmp.1u}, \eqref{est.asLip} and  \eqref{est.asbounded}, we have
        \begin{align*}
            |J[f-f^{n}]|
            &\les M \int_s^t \1_{U^n}(r,W_r(\omega)+ \psi_r)\dr+M\sup_{x\in B_R}\int_s^t\1_{U^n}(r,W_r(\omega)+x)\dr
            \\&\les M T^\alpha(1+R^{1-\varepsilon}+[\psi]^{1-\varepsilon}_{\var;[s,t]}) \Xi_{T,R}(\1_{U^n})(\omega).
        \end{align*}
        By \cref{lem.regularizing}\ref{(i)}\ref{it.xif}, we can further choose a subsequence, still denoted by $n$, such that, w.l.o.g. on $\Omega^\prime_{f,T,R}$,
        \begin{align*}
             \lim_n \Xi_{T,R}(f^n)=\Xi_{T,R}(f)\tand\lim_n \Xi_{T,R}(\1_{U^n})=0.
        \end{align*}
        We emphasize that the null events only depend on $f,T,R$ and are independent from $s,t, \psi$. Then, by passing through the limit in $n$, we obtain that
        \begin{align*}
            |J[f]|\le  C \Xi_{T,R}(f)(\omega)[\psi]_{\var;[s,t]}^{1- \varepsilon} (t-s)^\alpha,
        \end{align*}
        which shows \eqref{est.young}. 
        Additionally, from \eqref{est.asLip}, we have
        \begin{align*}
            \Big|\int_s^t f(r,W_r(\omega)+\psi_s)\dd r-\int_s^t f(r,W_r(\omega))\dd r\Big|\le C \Xi_{T,R}(f)(\omega)|\psi_s|^{1- \varepsilon}(t-s)^{\alpha}.
        \end{align*}
        Combining with \eqref{est.young}, we obtain \eqref{est.intfome}.

        \textit{Step 3.} Consider now the case $f \in\LL^q_p$. 
        For each integer $M\ge1$, define $f^M=f\1_{(|f|\le M)}$. Note that $(f^M)$ (respectively $(|f^M|)$) is  a sequence of bounded functions converging to $f$ (respectively $|f|$) in $\LL^q_p$. We can choose a sequence $(M_n)$ such that $(\Xi_{T,R}(f^{M_n}),\Xi_{T,R}(|f^{M_n}|))$ converges to $(\Xi_{T,R}(f),\Xi_{T,R}(|f|))$ on $\Omega^\prime_{f,T,R}$, again w.l.o.g. 
        Define $G=\{(r,z)\in[0,T]\times\Rd: \lim_nf^{M_n}(r,z)=f(r,z)\text{ and }\lim_n|f^{M_n}|(r,z)=|f|(r,z)\}$ and note that $|G^c|=0$. By \eqref{est.Xif}, we can assume w.l.o.g. that $\Xi_{T,R}(\1_{G^c})(\omega)=0$ for every $\omega\in\Omega_{f,T,R}$. By \eqref{est.intfome}, for  $\omega\in\Omega_{f,T,R}$. we have
  \begin{align*}
    \int_0^T \1_{G^c}(r,W_r(\omega)+\psi_r)dr=0 .
  \end{align*}
  This implies that $f^{M_n}(r,W_r(\omega)+\psi_r)\to f(r,W_r(\omega)+\psi_r)$ and $|f^{M_n}|(r,W_r(\omega)+\psi_r)\to |f|(r,W_r(\omega)+\psi_r)$ for a.e. $r\in [0,T]$. 
  Using this, monotone convergence theorem and \eqref{est.intfome}, we then have
  \begin{align*}
    \int_0^T |f(r,W_r(\omega)+\psi_r)|dr
    &=\lim_n\int_0^T |f^{M_n}(r,W_r(\omega)+\psi_r)|dr
    \\&\quad\le \Xi_{T,R}(|f|)(\omega)(1+R^{1-\varepsilon}+[\psi]^{1- \varepsilon}_{\var;[0,T]})T^\alpha.
  \end{align*}
  This shows that  for every $\psi:[0,T]\to B_R$  of finite variation, the function $t\mapsto f(t,W_t(\omega)+\psi_r)$ is integrable on $[0,T]$. Furthermore, by dominated convergence theorem, we have for every $t\in [0,T]$, 
  \begin{align*}
    \int_0^t f(r,W_r(\omega)+\psi_r)dr
    &=\lim_n\int_0^t f^{M_n}(r,W_r(\omega)+\psi_r)dr.
  \end{align*}
    An analogous argument shows that for every $(t,x)\in[0,T]\times B_R$,
    \begin{align*}
        \int_0^t f(r,W_r(\omega)+x)dr
        &=\lim_n\int_0^t f^{M_n}(r,W_r(\omega)+x)dr.
      \end{align*}
    Noting that $f^{M_n}$ is bounded measurable, hence, the previous step shows that \eqref{est.young} and \eqref{est.asbounded} hold with $f^{M_n}$ in place of $f$. From here, it suffices to send $n\to\infty$ and apply the above limit identities to obtain \eqref{est.young} and \eqref{est.asbounded} for $f$.
    This concludes the proof.
\end{proof}

    \begin{corollary}\label{cor.intcont}
        Let $f$ be a function in $\LL^q_p$ for some $(p,q)\in\cjj$.
        Let $(\abz, \rho)$ be a metric space and $(t,z)\mapsto \psi^z_t$ be a bounded function mapping from  $[0,T]\times\abz$ to $\mathbb{R}$  such that
        \begin{enumerate}[(i)]
            \item\label{it.var} for each $z\in\abz$, $t\mapsto \psi^z_t$ has finite variation and $\sup_{z\in\abz}(|\psi^z_0|+[\psi^z]_{\var;[0,T]})<\infty$;
            \item\label{it.equicont} 
            for each $t \in [0,T]$, $z\mapsto\psi^{z}_t$ is continuous.
        \end{enumerate}
        Then there exist an event $\Omega^\prime_{f,T}$ of full measure and a sequence of bounded continuous functions $(f^n)$ which are independent from $\psi$ and $(\abz,\rho)$, such that $\lim_nf^n=f$ in $\LL^q_p$ and for every $\omega\in \Omega^\prime_{f,T}$,
        \begin{align*}
        \lim_n\int_0^tf^n(r,W_r(\omega)+\psi^z_r)\dr=\int_0^t f(r,W_r(\omega)+\psi^z_r)\dr \text{ uniformly in }(t,z)\in[0,T]\times\abz. 
        \end{align*}
        Consequently, the map
        \begin{align*}
   (t,z)\mapsto\int_0^tf(r,W_r(\omega)+\psi^z_r)\dr
        \end{align*}
        is continuous on $[0,T]\times\abz$.
    \end{corollary}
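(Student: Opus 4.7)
The plan is to approximate $f$ in $\LL^q_p$ by a sequence of bounded continuous functions $(f^n)$ depending only on $f$, for instance with $\sum_n\|f^n-f\|_{\LL^q_p}<\infty$, and then transfer the continuity of $(t,z)\mapsto\int_0^t f^n(r,W_r(\omega)+\psi^z_r)\dr$ to the limit via uniform convergence on $[0,T]\times\abz$. The estimate \eqref{est.intfome} from \cref{lem.nonlinYoung} applied to the differences $f^n-f$ is the central tool.

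First I would construct the exceptional null set independently of $\psi$ and $(\abz,\rho)$. Fix the exhaustion $K_N:=\{x\in\Rd:|x|\le N\}$ and use \cref{lem.regularizing}\ref{it.xif} together with Tonelli to deduce $\sum_n\Xi_{T,K_N}(f^n-f)<\infty$ almost surely for every $N$. Set $\Omega_{f,T}$ to be the intersection over $N\ge 1$ of these full-measure events with the events $\Omega_{f^n-f,T,K_N}$ from \cref{lem.nonlinYoung}. Then $\Omega_{f,T}$ depends only on $f$, $T$, and the prescribed sequence $(f^n)$, and on $\Omega_{f,T}$ one has $\Xi_{T,K_N}(f^n-f)\to 0$ as $n\to\infty$ for every $N$.

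Next, fix $\omega\in\Omega_{f,T}$ together with any $\psi$ and $(\abz,\rho)$ satisfying \textup{(i)}--\textup{(ii)}. By the boundedness of $\psi$, choose $N$ with $\psi^z_t\in K_N$ for all $(t,z)$. Applying \eqref{est.intfome} to $f^n-f$ on $K_N$ with $s=0$ and invoking \textup{(i)},
\begin{align*}
    \sup_{(t,z)\in[0,T]\times\abz}\Big|\int_0^t (f^n-f)(r,W_r(\omega)+\psi^z_r)\dr\Big|\les \Xi_{T,K_N}(f^n-f)(\omega)\bigl(1+\sup_z[\psi^z]_{\var;[0,T]}^{1-\varepsilon}\bigr)T^{\alpha},
\end{align*}
and this tends to zero by the previous step. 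This is precisely the asserted uniform convergence. For each fixed $n$, continuity in $t$ of $\int_0^t f^n(r,W_r(\omega)+\psi^z_r)\dr$ is immediate from boundedness of $f^n$, while continuity in $z$ follows from \textup{(ii)}, continuity of $f^n$, and dominated convergence; joint continuity on $[0,T]\times\abz$ is then routine. Since a uniform limit of continuous functions is continuous, the last assertion of the Corollary is a consequence.

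The main technical point is decoupling $(f^n)$ and $\Omega_{f,T}$ from $\psi$ and $(\abz,\rho)$. This is resolved by choosing $(f^n)$ from $f$ alone with summable $\LL^q_p$-error and by exhausting the image of any admissible $\psi$ by the countable family $(K_N)$; this absorbs the $L^\infty$-size of $\psi$ into some fixed $K_N$ without requiring any diagonal extraction.
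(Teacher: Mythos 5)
Your proposal is correct and follows essentially the same route as the paper: approximate $f$ by bounded continuous functions, use \cref{lem.regularizing}\ref{it.xif} to get $\Xi_{T,K}(f^n-f)\to0$ a.s.\ on an exhaustion of compacts, apply \eqref{est.intfome} together with hypothesis \ref{it.var} to obtain uniform convergence of the integrals, and conclude by continuity of uniform limits. Your variants (summable $\LL^q_p$-error instead of subsequence extraction, and explicitly intersecting the events $\Omega_{f^n-f,T,K_N}$ from \cref{lem.nonlinYoung}) only make the paper's implicit choices more explicit.
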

    \begin{proof}
        For each integer $i\ge1$, let $\Omega^\prime_{f,T,i}$ be as in \cref{lem.nonlinYoung}. In view of \eqref{est.Xif}, we can choose a sequence  of bounded continuous functions $(f^n)$ such that $\lim_n \Xi_{T,i}(f^n-f)=0$, w.l.o.g. on $\Omega^\prime_{f,T,i}$ for all integer $i\ge1$. 
        Because $\psi$ is bounded, there exists $j \in \mathbb{N}$ such that $\psi^z_t \in B_j$ for all $(t,z)\in[0,T]\times\abz$. Applying \eqref{est.intfome} and \ref{it.var}, we have for any $\omega \in \Omega^\prime_{f,T}\coloneqq\bigcap_{i \in \mathbb{N}} \Omega^\prime_{f,T,i}$, 
        \begin{align*}
            \lim_n\int_0^t f^n(r,W_r(\omega)+ \psi^z_r)\dr=\int_0^t f(r,W_r(\omega)+ \psi^z_r)\dr 
            \text{ uniformly in  }(t,z)\in[0,T]\times \abz.
        \end{align*}
        It remains to show that the function $(t,z)\mapsto \int_0^t f^n(r,W_r(\omega)+\psi^z_r)\dr$ is continuous for each $n$. To see that, fix $(t,z)$ and take a sequence $(t^i,z^i)_i$ converging to $ (t,z)$. Then
      \begin{align*}
          &\lim_i \Big|\int_0^{t^i} f^n(r,W_r(\omega)+ \psi^{z^i}_r) dr-\int_0^t f^n(r,W_r(\omega)+ \psi^{z}_r) dr\Big|\\
          \leqslant\lim_i&\Big|\int_0^{t^i} f^n(r,W_r(\omega)+ \psi^{z^i}_r) dr - \int_0^{t} f^n(r,W_r(\omega)+ \psi^{z^i}_r) dr\Big| \\
          &+ \lim_i\Big|\int_0^{t} f^n(r,W_r(\omega)+ \psi^{z^i}_r) dr - \int_0^t f^n(r,W_r(\omega)+ \psi^{z}_r) dr\Big|\\
          &\leqslant \lim_i \|f^n\|_\infty |t-t^i| + \lim_i\int_0^{T} |f^n(r,W_r(\omega)+ \psi^{z^i}_r) dr -f^n(r,W_r(\omega)+ \psi^{z}_r)|dr.
      \end{align*}
      The convergence to $0$ for the first term in the above is clear. For the second one, we can apply dominated convergence due to \eqref{est.intfome}; then convergence to $0$ for the second term holds by continuity of $f^n$ and Assumption \ref{it.equicont}.

    \end{proof}
     \begin{remark}\label{rmk:others} We record some consequential observations which may be useful for other purposes. 

        (i) From \cref{lem.nonlinYoung}, it follows that for a.s. $\omega$, a solution of \eqref{eqn.ODEomega} is also a solution in the framework of nonlinear Young integrals. More precisely, for a.s. $\omega$, if $Y$ is a solution to \eqref{eqn.ODEomega}, then there is a control $w$ and a number $\beta>1$ such that for $\psi=Y-W$,
        \begin{align*}
            |\psi_t-\psi_s-\int_s^tb(r,W_r(\omega)+\psi_s)\dr|\le w(s,t)^{\beta} \quad \forall (s,t)\in\simt.
        \end{align*}
        Via the sewing lemma, this means that $\psi$ is a solution to the nonlinear Young integral equation (\cite{MR3581224})
        \begin{align*}
            \psi_t=\psi_s+\int_s^t b^{W(\omega)} (\dr, \psi_r) \quad \forall (s,t)\in\simt,
        \end{align*}
        where $\int_s^t b^{W(\omega)}(\dr, \psi_r) $ is the nonlinear Young integral defined as the limit of the Riemann sums
        \begin{align*}
            \sum_{[u,v]}\int_u^v b(r, W_r(\omega)+ \psi_u) \dr.
        \end{align*}
        While nonlinear Young integral equations have been a central theme in previous works \cite{anzeletti2021regularisation,CatellierGubinelli,GaleatiGubinelli,HarangPerkowski,GaleatiGerencser}, it was not known if the ODE \eqref{eqn.ODEomega} under the Krylov-R\"ockner condition \eqref{LpLq} can be formulated in this framework.

        (ii) From \cref{cor.intcont}, it follows that for a.s. $\omega $, a solution of \eqref{eqn.ODEomega} is also a \emph{regularized solution} in the following sense: For a.s. $\omega$, if $Y$ is a solution to \eqref{eqn.ODEomega} then there exists a sequence of bounded continuous function $(b^n)$ and a continuous function $V:[0,T]\to\Rd$ such that
        $V_t=\lim_n\int_0^t b^n(r,Y_r)\dr$ uniformly on $[0,T]$
        and $Y_t=Y_0+V_t+W_t(\omega)$ for all $t\in[0,T]$.
        Regularized solutions of stochastic differential equations appear in \cite{anzeletti2021regularisation,BC,athreya2020well,butkovsky2023stochastic}.
    \end{remark} 
\section{Proof of main results} %
\label{sec:proof_main_results}
    We outline the main steps.
    For a given $\omega$ and a solution $Y$ to \eqref{eqn.ODEomega}, we obtain in \cref{prop.apri1} a priori estimates on the variations of $Y-W$ on arbitrary intervals. This allows us to show the existence of random H\"older continuous semiflow $(X^{s,x}_t)_{s,t,x}$ over arbitrary finite time period $[0,T]$, see \cref{lem.semiflow}. 
    We then show in \cref{prop.regularityabs} another set of a priori estimates on $Y_t-X^{s,Y_s}_t(\omega)$ for any given $(s,t)\in\simt$. Having these properties at our disposal, we proceed to prove path-by-path uniqueness, i.e. \cref{thm.main}\ref{it.pbpunique}. By utilizing path-by-path uniqueness, we can extend the random semiflow constructed in \cref{lem.semiflow} over finite time periods to all nonnegative times, thereby showing \cref{thm.main}\ref{it.complete}. 

    \begin{proposition}\label{prop.apri1}
    Let 
    $R>0$, $(u,v)\in\simt$, $\varepsilon\in(0,1)$ and $\alpha \in (0,\frac{1}{2}-\frac{d}{2p}-\frac{1}{q})$. Let $\omega\in \Omega^\prime_{|b|,T,R}$ where $ \Omega^\prime_{|b|,T,R}$ is the event in \cref{lem.nonlinYoung}. In particular, \eqref{est.young} and \eqref{est.intfome} hold with $f=|b|$ and any function $\psi$ of finite variation taking values in $B_R$.
	Let $Y:[u,v]\to\Rd$ be a solution to \eqref{eqn.ODEomega} on $[u,v]$ for such $\omega$, i.e. $\int_u^v|b(r,Y_r)|\dr<\infty$ and
    \begin{align*}
        Y_t=Y_s+\int_s^t b(r,Y_r)\dr+W_t(\omega)-W_s(\omega) \quad \forall (s,t)\in \simpl{u}{v}.
    \end{align*}
    Assume that  $Y_t- W_t(\omega)\in B_R$ for all $t\in[u,v]$. 
	Then there exists a finite constant $C=C(\varepsilon, \alpha)$ such that
        \begin{gather}
            \label{est.apribY}
            \int_u^v|b|(r,Y_r)\dr\le 
            C\Xi_{T,R}(|b|)(\omega)(1+R^{1-\varepsilon}) (v-u)^\alpha
            +
            C\big(\Xi_{T,R}(|b|)(\omega)(v-u)^\alpha\big)^{\frac1 \varepsilon}.
        \end{gather}    
    \end{proposition}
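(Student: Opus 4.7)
The plan is to introduce the path $\psi_t := Y_t - W_t(\omega)$ on $[u,v]$, which subtracts off the Brownian part, and then invoke the nonlinear Young estimate \eqref{est.intfome} of \cref{lem.nonlinYoung}. Since $Y$ solves \eqref{eqn.ODEomega} on $[u,v]$, one has $\psi_t - \psi_s = \int_s^t b(r, W_r(\omega) + \psi_r) \dr$ for every $(s,t) \in \simpl{u}{v}$, so $\psi$ is of bounded variation with total variation on $[u,v]$ controlled by $A := \int_u^v |b|(r, Y_r) \dr$, which is finite by the definition of solution. The standing hypothesis $Y - W(\omega) \in K$ on $[u,v]$ means $\psi$ takes values in $K$, so \cref{lem.nonlinYoung} applies to $f = |b|$ and the path $\psi$, yielding
\[
    A \le \Xi_{T,K}(|b|)(\omega)(v-u)^\alpha + C\, \Xi_{T,K}(|b|)(\omega)\, [\psi]_{\var;[u,v]}^{1-\varepsilon}\, (v-u)^\alpha.
\]

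Writing $B := \Xi_{T,K}(|b|)(\omega)(v-u)^\alpha$ and using $[\psi]_{\var;[u,v]} \le A$, this reduces to the scalar implicit inequality $A \le B + CB A^{1-\varepsilon}$. It then remains to solve this for $A$. A standard dichotomy finishes the argument: either $A \le 2B$, or $A > 2B$, in which case $A/2 \le CB A^{1-\varepsilon}$ forces $A^\varepsilon \le 2CB$, and hence $A \le (2CB)^{1/\varepsilon}$. Combining both cases produces an inequality of the form claimed in \eqref{est.apribY}, up to absorbing universal constants into $C(\varepsilon,\alpha)$.

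No real obstacle is anticipated. The main conceptual step is the reformulation in terms of $\psi$, after which \cref{lem.nonlinYoung} is precisely tailored to the situation: its hypotheses of finite variation and values in a compact set are exactly what the notion of solution and the standing assumption $Y - W(\omega) \in K$ provide. The self-referential inequality $A \le B + CB A^{1-\varepsilon}$ is elementary to solve and is the origin of the exponent $1/\varepsilon$ in the final bound.
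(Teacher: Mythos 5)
Your proposal is correct and follows essentially the same route as the paper: set $\psi=Y-W(\omega)$, bound $[\psi]_{\var;[u,v]}$ by $A=\int_u^v|b|(r,Y_r)\dr$ (finite by the definition of solution), apply \eqref{est.intfome} to get the self-referential inequality $A\le B+CBA^{1-\varepsilon}$ with $B=\Xi_{T,K}(|b|)(\omega)(v-u)^\alpha$, and then solve it. The only (immaterial) difference is the last step: the paper absorbs the term $CBA^{1-\varepsilon}$ via Young's inequality, $\Xi_{T,K}(|b|)A^{1-\varepsilon}(v-u)^\alpha\le \varepsilon' A+C_{\varepsilon'}B^{1/\varepsilon}$, whereas you use the dichotomy $A\le 2B$ versus $A^{\varepsilon}\le 2CB$; both are elementary and yield \eqref{est.apribY} up to the harmless multiplicative constant in front of $B$.
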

    \begin{proof}
        Since $\omega\in \Omega^\prime_{|b|,T,R}$ is fixed, we omit the dependence on $\omega$ in this proof.
        Define $\psi=Y-W$ and 
        note that
        \begin{align*}
            [\psi]_{\var;[u,v]}\le\int_u^v|b|(r,Y_r)\dr.
        \end{align*}
        Applying \eqref{est.intfome}, we have
        \begin{align*}
            \int_u^v|b|(r,Y_r)\dr&\le 
             C\Xi_{T,R}(|b|)(1+R^{1-\varepsilon}) (v-u)^\alpha
            \\&\quad+C \Xi_{T,R}(|b|)\left(\int_u^v|b|(r,Y_r)\dr\right)^{1- \varepsilon}(v-u)^\alpha.
        \end{align*}
        Applying Young's inequality, we have for every $\varepsilon'>0$
        \begin{align*}
            \Xi_{T,R}(|b|)& \Big|\int_u^v|b|(r,Y_r)\dd r\Big|^{1- \varepsilon}(v-u)^\alpha
            \le \varepsilon' \int_u^v|b|(r,Y_r)\dd r+C_{\varepsilon'}\big(\Xi_{T,R}(|b|)(v-u)^\alpha\big)^{\frac1 \varepsilon}.
        \end{align*}
        We choose $\varepsilon'$ sufficiently small to get \eqref{est.apribY}.
    \end{proof}
    \begin{proposition}[Random H\"older semiflow]\label{lem.semiflow} Let $T>0$.  Under
    \eqref{LpLq}, there exists an almost surely continuous semiflow to
    \eqref{SDE} over $[0,T]$  which is locally $\kappa$-H\"older continuous in space for every
    $\kappa\in(0,1)$. In other words, there exists a jointly measurable map $(s,
    t,x,\omega)\mapsto X^{s,x}_t(\omega)$ defined on $\simt\times\Rd\times
    \Omega$ and a set $\Omega_{b,T}^{flow}$ of full measure such that for $\omega\in \Omega_{b,T}^{flow}$, $(s,t,x)\mapsto X^{s,x}_t(\omega)$ is a
    continuous semiflow generated by \eqref{eqn.ODEomega} which is locally
    $\kappa$-H\"older continuous in space for every $\kappa\in(0,1)$.
    \end{proposition}
    \begin{proof}
        For each $s,x$, let $(\tilde X^{s,x}_t)_{t\in[s,T]}$ be the unique strong solution to \eqref{SDE} started from $x$ at time $s$. By definition of a solution, for every $\omega$ in a set of full measure that depends on $s$ and $x$, we have $\int_s^T|b(r,\tilde X^{s,x}_r(\omega))|\dd r<\infty$ and
        \begin{align}\label{tmp.eqn-initial}
            \tilde X^{s,x}_t(\omega)=x+\int_s^tb(r,\tilde X^{s,x}_r(\omega))\dd r+W_t(\omega)-W_s(\omega) \quad \forall t\in[s,T].
        \end{align}

        Let $m\ge2$ be a fixed number.
        By \cite[Theorem 1.2 (1.2)]{GL}
        \begin{align}\label{est:stab}
            \|\tilde X^{s,x}_t-\tilde X^{s,y}_t\|_{L^m(\Omega)}\lesssim \Vert x-y\Vert_{L^m(\Omega)}
        \end{align} 
        for any initial random points $x,y$ which are $\cff_s$-measurable.
        Applying \cite[Lemma 4.5]{LL} together with Girsanov Theorem (same proof of \cite[Corollary 3.4]{Kr-Ro} with keeping track of the time regularity) %
        \begin{align*}
            \|\tilde X^{s,x}_t-\tilde X^{s,x}_{t'}\|_{L^m(\Omega)}\lesssim\big\Vert\int_{t'}^tb(r,\tilde{X}_r^{s,x})\dd r\big\Vert_{L^m(\Omega)}+\|W_{t}-W_{t'}\|_{L^m(\Omega)}\lesssim |t-t'|^{\frac{1}{2} }.
        \end{align*}
        Using \eqref{est:stab} and pathwise uniqueness, we have for $s'<s$, 
        \begin{align*}
            \|\tilde X^{s,x}_t-\tilde X^{s',x}_t\|_{L^m(\Omega)}
            =\|\tilde X^{s,x}_t-\tilde X^{s,\tilde X^{s',x}_s}_t\|_{L^m(\Omega)}
            \lesssim \|x-\tilde X^{s',x}_s\|_{L^m(\Omega)}
            \lesssim |s-s'|^{\frac12}.
        \end{align*}
        It follows, that, for $x,y$ deterministic,
        \begin{align} \label{eq:cont}
            \|\tilde X^{s,x}_t-\tilde X^{s',y}_{t'}\|_{L^m(\Omega)}\lesssim |x-y|+ |t-t'|^{\frac{1}{2} }+|s-s'|^{\frac12}.
        \end{align}
        Since $m$ can be arbitrarily large, applying  Kolmogorov continuity criterion (\cite[Theorem (2.1) Chapter I]{MR1725357}), we see that for a.s. $\omega$, the map $(s,t,x)\mapsto\tilde X^{s,x}_t(\omega)$ 
        is locally H\"older continuous on 
        \[
            \{(s,t,x)\in\simt\times\Rd: s,t,x \text{ are dyadic}\}
        \] 
        with exponents $(\alpha',\beta,\kappa)\in(0,1/2)\times(0,1/2)\times(0,1)$. Because for each $s,x$, $(\tilde X^{s,x}_t(\omega))_{t\in[s,T]}$
        is continuous, the map $(s,t,x)\mapsto\tilde X^{s,x}_t(\omega)$ is locally H\"older continuous on 
        \[
            G\coloneqq\{(s,t,x)\in\simt\times\Rd: s,x \text{ are dyadic}\}
        \]
        with the same exponents.

        Let $\hat{\Omega}_{b,T}$ be the event of full measure on which \eqref{tmp.eqn-initial} holds whenever $s,x$ are dyadic, and $\tilde X$ is locally H\"older continuous on $G$.
        For each $\omega\in \hat{\Omega}_{b,T}$, let $X(\omega)$ be the (unique) continuous extension of $\tilde X(\omega)\big|_G$ to $\simt\times\Rd$. We show that $(X^{s,x}_t)_{s,t,x}$ is the desired semiflow. H\"older regularity is clear and hence, we focus on showing the semiflow properties. 
        Since $b$ is not continuous, it is non-trivial that for almost every $\omega$ and for each $(s,x)$, the map $t\mapsto X^{s,x}_t(\omega)$ satisfies the equation \eqref{eqn.ODEomega}.
        The main difficulty is to show that the continuous extension of the map
        \begin{align*}
            (s,t,x)\mapsto \int_s^t b(r,\tilde X^{s,x}_r(\omega))\dr= \int_s^t b(r, X^{s,x}_r(\omega))\dr
        \end{align*}
        which is defined on $G$, is identical to the map
        \begin{align*}
            (s,t,x)\mapsto \int_s^t b(r, X^{s,x}_r(\omega))\dr,
        \end{align*}
        which is defined on $\simt\times\Rd$.

        Let $H$ be a closed ball in $\Rd$ and put $\abz=[0,T]\times H$.  
        Let $\tilde{\Omega}_{b,|b|,T}\coloneqq \hat{\Omega}_{b,T}\cap \bigcap_{n \in \mathbb{N}} \Omega^\prime_{|b|,T,n}$, for $\Omega^\prime_{|b|,T,n}$ as in \cref{lem.nonlinYoung}, i.e. \eqref{est.young} and \eqref{est.intfome} hold for $f=|b|$ and any function $\psi$ of finite variation with values in $B_n$. We verify the conditions \ref{it.var} and \ref{it.equicont} of \cref{cor.intcont} for $\omega\in \tilde{\Omega}_{b,|b|,T}$. 
        For each $(t,s,x)\in[0,T]\times\abz$, define $\psi^{s,x}_t(\omega): = X^{s,x}_{\min(s,t)}(\omega)-W_{\min(s,t)}(\omega)$,
         and if $s,x$ are dyadic, define  $\tilde\psi^{s,x}_t(\omega) \coloneqq\tilde X^{s,x}_{\min(s,t)}(\omega)-W_{\min(s,t)}(\omega)$.
        Note that $\psi(\omega)$ is the continuous extension of
    $\tilde \psi(\omega)$. By continuity, there exists $N \in \mathbb{N}$ such that $\psi^{s,x}_\cdot(\omega)$ takes values in $B_N$ for all $(s,x) \in \abz$.
        As \eqref{est.intfome} holds, we can apply \cref{prop.apri1} for the ball $B_N$ to find a constant $C_{T,N}(\omega)$  such that whenever $s,x$ are dyadic,
        \begin{align*}
              [\psi^{s,x}(\omega)]_{\var;[0,T]}=[\tilde\psi^{s,x}(\omega)]_{\var;[0,T]}\le  \int_s^T|b|(r,\tilde X^{s,x}_r(\omega))\dr\le C_{T,N}(\omega) .
        \end{align*}
        This means that for any partition $\pi$ of $[0,T]$ and $(s,x)$ dyadic in $\abz$
        \begin{align*}
            \sum_{[u,v]\in \pi}|\psi^{s,x}_v(\omega)- \psi^{s,x}_u(\omega)|\le C_{T,N}(\omega).
        \end{align*}
        By continuity, the above estimate also holds for every $(s,x)\in \abz$. This implies that
        \begin{align*}
            \sup_{(s,x)\in \abz} [\psi^{s,x}(\omega)]_{\var;[0,T]}\le C_{T,N}(\omega),
        \end{align*}
        verifying \ref{it.var}.
        Condition \ref{it.equicont} is satisfied because $ X(\omega)$ is  locally H\"older continuous on $\simt\times \Rd$.
        Let $\Omega_{b,T}$ be the event in \cref{cor.intcont}. 
        Let $\omega\in \hat{\Omega}_{b,T}\cap \Omega_{b,T}$. Applying \cref{cor.intcont}, we see that the map
            \begin{align*}
                (t,s,x)\mapsto \int_0^t b(r,W_r(\omega)+\psi^{s,x}_r(\omega))\dr=\int_0^t b(r,X^{s,x}_r(\omega))\dr
            \end{align*}
        is continuous on $[0,T]\times\abz$.

        It is now clear from \eqref{tmp.eqn-initial} that for every $\omega\in \tilde{\Omega}_{b,|b|,T}\cap \Omega_{b,T}$, for every $(s,t,x)\in\simt\times H$, we have
        \begin{align*}
             X^{s,x}_t(\omega)=x+\int_s^tb(r, X^{s,x}_r(\omega))\dd r+W_t(\omega)-W_s(\omega) .
        \end{align*}
        By exhausting $\Rd$ with a sequence of increasing closed balls $(H^i)$, we see that the above equation holds for every $(s,t,x)\in\simt\times\Rd$.
        By pathwise uniqueness and using that existence of a strong solution also holds for random initial condition, we have for every $s\le u\le t$ and every $x\in\Rd$
        \begin{align}\label{tmp.contsemiflow}
             {X}^{s,x}_t= {X}^{u, {X}^{s,x}_u}_t \quad \text{a.s.}
        \end{align}
        Note that the exceptional null event \eqref{tmp.contsemiflow} depends on $s,u,t,x$. 
        However, because all processes in \eqref{tmp.contsemiflow} are continuous, one can deduce that on a set $\Omega_{b,T}^{flow}$ of full measure,
        ${X}^{s,x}_t= {X}^{u, {X}^{s,x}_u}_t$ for any $s,u,t,x$, which means that
        $(X^{s,x}_t(\omega))_{s,t,x}$ is a semiflow. 
    \end{proof}
     \begin{remark}\label{rmk.issue}
            The fact that for almost every $\omega$, for every $s,x$, $(X^{s,x}_t(\omega))_t$ is a solution to \eqref{eqn.ODEomega} is crucial for the proof of path-by-path uniqueness. When $b$ is not continuous, this property becomes highly non-trivial and its justification seems missing from the literature (see \cite{AVS,shaposhnikov2017correction}). We achieved this property in \cref{lem.semiflow} by utilizing the regularizing estimates from \cref{lem.nonlinYoung}.
            This issue is irrelevant to \cite{Davie} because of its different arguments.
            We take this chance to note that the exceptional null events in \cite{Davie} depend on the initial condition. This dependence can be removed following our arguments herein. In fact, one just replaces \cref{prop.davie} by Davie's basic estimate, then the rest of the arguments follows with minimal adjustments. In particular, \cref{lem.semiflow,thm.main} below hold with $p=q=\infty$.
        \end{remark}
    \begin{definition}\label{def.Omegabtk}
        For each $T>0$ and $R>0$, let $\Omega_{b,T,R}\coloneqq \Omega^\prime_{b,T,R}\cap \Omega^\prime_{|b|,T,R}\cap \Omega^{flow}_{b,T}$, i.e. the event such that for every $\omega\in
        \Omega_{b,T,R}$,
        \begin{itemize} \item the semiflow $(X^{s,
        x}_t(\omega))$ supplied by \cref{lem.semiflow} is continuous and locally $\kappa$-H\"older continuous in
        space for every $\kappa\in(0,1)$;
         \item \eqref{est.young} and \eqref{est.intfome} hold with
        $f\in\{b,|b|\}$ and any function $\psi$ of finite variation taking
        values in $B_R$. 
    \end{itemize}
    \end{definition}
    In view of \cref{lem.nonlinYoung,lem.semiflow}, $\Omega_{b,T,R}$ has full probability measure. 
    We note that at the current state, for each $\omega\in \Omega_{b,T,R}$,  the semiflow $(X^{s,x}_t(\omega))$ may depend on $T$. This dependence will be removed once we show path-by-path uniqueness.  
	The next result, together with \cref{prop.apri1}, provides \emph{a priori} estimates for any solution to \eqref{eqn.ODEomega} for each $\omega\in \Omega_{b,T,R}$.

 \begin{proposition}\label{prop.regularityabs}
		Let $T>0$ and  $R>0$.
		Let $\omega \in \Omega_{b,T,R}$ and $Y$ be a solution to \eqref{eqn.ODEomega} on $[S,T]$ for some $S \in [0,T]$. 
		Suppose that  $X^{s,Y_s}_t(\omega)-W_t(\omega)$ belongs to $B_R$ for every $(s,t)\in\simst$.
		Then, there exist 
		\begin{itemize}
		 	\item[(i)] a control $w$ which depends only on $Y,\Xi_{T,R}(b)(\omega),\Xi_{T,R}(|b|)(\omega),d,p,q$; 
		 	\item[(ii)] a constant $\beta>1$ which depends only on $d,p,q$
		\end{itemize} 
		such that
		\begin{align}\label{est.apriY}
			|Y_t-X^{s,Y_s}_t(\omega)|\le w(s,t)^{\beta} \quad \forall (s,t)\in\simpl{S}{T}.
		\end{align}
	\end{proposition}
	\begin{proof}
		We omit the dependence of $\omega$ in the argument below.	
		Define $\psi_\cdot=Y_{\min(S,\cdot)}-W_{\min(S,\cdot)}$ and $\xi^{s,x}_\cdot=X^{s,x}_{\min(s,\cdot)}-W_{\min(s,\cdot)}$. The proof  consists of two steps.

		\textit{Step 1.} We show that for $\varepsilon\in(0,\alpha)$, there exist a constant  $C=C(\Xi_{T,R}(|b|),\varepsilon)$ such that 
		\begin{multline}\label{est.absb}
			\Big|\int_s^t[ |b|(r,X^{s,Y_s}_r)- |b|(r,Y_r)]\dd r\Big|\le C \left(\int_s^t|b|(r,Y_r)\dd r\right)^{1- \varepsilon}(t-s)^{\alpha}
			\\+C(t-s)^{\frac \alpha \varepsilon}  \quad \forall (s,t)\in\simst.
		\end{multline}
		As $\psi_s=\xi_s^{s,Y_s}$, we obtain that
		\begin{align*}
			&\int_s^t [|b|(r,X^{s,Y_s}_r)-|b|(r,Y_r)]\dd r
			\\&= \int_s^t[ |b|(r,X^{s,Y_s}_r)-|b|(r,W_r+\xi^{s,Y_s}_s)] \dd r -\int_s^t [|b|(r,Y_r)-|b|(r,W_r+\psi_s)]\dd r.
		\end{align*}
		We note that $\xi^{s,Y_s}$ and $\psi$ are contained in $B_R$ and have finite variation such that
		\begin{gather*}
			[\xi^{s,Y_s}]_{\var;[s,t]}\le \int_s^t|b|(r,X^{s,Y_s}_r)\dd r
			\tand [\psi]_{\var;[s,t]}\le \int_s^t|b|(r,Y_r)\dd r.
		\end{gather*}
		Applying \cref{lem.nonlinYoung}, we have for every $\varepsilon\in(0,\alpha)$ and every $(s,t)\in\simst$,
		\begin{gather*}
			\Big| \int_s^t[ |b|(r,X^{s,Y_s}_r)-|b|(r,W_r+\xi^{s,Y_s}_s)] \dd r\Big|\les \Xi_{T,R}(|b|)\left(\int_s^t|b|(r,X^{s,Y_s}_r)\dd r\right) ^{1- \varepsilon}(t-s)^{\alpha},
			\\\Big| \int_s^t[ |b|(r,Y_r)-|b|(r,W_r+\psi_s)] \dd r\Big|\les \Xi_{T,R}(|b|)\left(\int_s^t|b|(r,Y_r)\dd r\right) ^{1- \varepsilon}(t-s)^{\alpha}.
		\end{gather*}
		We also have
		\begin{align} \label{eq:triangle}
			\Big|\int_s^t|b|(r,X^{s,Y_s}_r)\dd r\Big|^{1- \varepsilon}\le \Big|\int_s^t[|b|(r,X^{s,Y_s}_r)-|b|(r,Y_r)]\dd r\Big|^{1- \varepsilon}+\Big|\int_s^t|b|(r,Y_r)\dd r\Big|^{1- \varepsilon}.
		\end{align}
		It follows that
		\begin{multline*}
			\Big|\int_s^t[ |b|(r,X^{s,Y_s}_r)- |b|(r,Y_r)]\dd r\Big|
			\les \Xi_{T,R}(|b|) \Big|\int_s^t[|b|(r,X^{s,Y_s}_r)-|b|(r,Y_r)]\dd r\Big|^{1- \varepsilon}(t-s)^\alpha
            \\+{\Xi_{T,R}(|b|)}\left(\int_s^t|b|(r,Y_r)\dd r\right)^{1- \varepsilon}(t-s)^\alpha.
		\end{multline*}
		Applying Young's inequality, we have for every $\varepsilon'>0$
		\begin{align}\label{ieq:Young}
			\Xi_{T,R}(|b|)& \Big|\int_s^t[|b|(r,X^{s,Y_s}_r)-|b|(r,Y_r)]\dd r\Big|^{1- \varepsilon}(t-s)^\alpha
			\nonumber\\&\le \varepsilon' \Big|\int_s^t[|b|(r,X^{s,Y_s}_r)-|b|(r,Y_r)]\dd r\Big|+C_{\varepsilon'}\big(\Xi_{T,R}(|b|)(t-s)^\alpha\big)^{\frac1 \varepsilon}.
		\end{align}
		Combining the previous two inequality and choosing $\varepsilon'$ sufficiently small yields \eqref{est.absb}.

		\textit{Step 2.} We show \eqref{est.apriY}.	Let $(s,t)\in\simst$ be fixed.
		Using the equations and the identity $\psi_s=\xi_s^{s,Y_s}$, we obtain that
		\begin{align*}
		 	|X^{s,Y_s}_t-Y_t|
		 	&=\left|\int_s^t [b(r,X^{s,Y_s}_r)-b(r,Y_r)]\dd r\right|
			\\&\leqslant \left|\int_s^t[ b(r,X^{s,Y_s}_r)-b(r,W_r+\xi^{s,Y_s}_s)] \dd r\right| + \left|\int_s^t [b(r,Y_r)-b(r,W_r+\psi_s)]\dd r\right|.
		\end{align*}
		Applying \cref{lem.nonlinYoung}, we have for every $\varepsilon\in(0,\alpha)$,
            \begin{gather*}
			\Big| \int_s^t[ b(r,X^{s,Y_s}_r)-b(r,W_r+\xi^{s,Y_s}_s)] \dd r\Big|\les \Xi_{T,R}(b)\left(\int_s^t|b|(r,X^{s,Y_s}_r)\dd r\right) ^{1- \varepsilon}(t-s)^{\alpha}
			,\\\Big| \int_s^t[ b(r,Y_r)-b(r,W_r+\psi_s)]\dd r\Big|\les \Xi_{T,R}(b)\left(\int_s^t|b|(r,Y_r)\dd r\right) ^{1- \varepsilon}(t-s)^{\alpha}.
		\end{gather*}
		Combining with \eqref{eq:triangle}, we have
		\begin{align*} %
		 	|X^{s,Y_s}_t-Y_t|
			\les \Xi_{T,R}(b) &\Big|\int_s^t[|b|(r,X^{s,Y_s}_r)-|b|(r,Y_r)]\dd r\Big|^{1- \varepsilon}(t-s)^\alpha \\
	        &+\Xi_{T,R}(b)\left(\int_s^t|b|(r,Y_r)\dd r\right)^{1- \varepsilon}(t-s)^\alpha.
		\end{align*}
		We apply \eqref{ieq:Young} and \eqref{est.absb} to get
        \begin{align*}
        |X_t^{s,Y_s}-Y_t|\leqslant C(\varepsilon,\Xi_{T,R}(|b|),\Xi_{T,R}(b))\left(\left(\int_s^t |b|(r,Y_r) \dd r\right)^{1-\varepsilon}(t-s)^\alpha + (t-s)^{\alpha/\varepsilon}\right).
        \end{align*}
		Choosing $\varepsilon$ small enough such that $\alpha/\varepsilon>1$ yields \eqref{est.apriY}.
	\end{proof}
	\begin{proof}[\textbf{Proof of \cref{thm.main}}] The proof is divided into several steps.

        \textit{Step 1.} 
        Let $T>0$ be fixed but arbitrary. 
        For each positive integer $n$, 
		let 
        $\Omega_{b,T,n}$ be the event defined by \cref{def.Omegabtk}.
        We define $$\Omega_{b,T}=\cup_{k=1}^\infty\cap_{n=k}^\infty \Omega_{b,T,n}$$ and note that $\Omega_{b,T}$ has full probability because each $\Omega_{b,T,n}$ does.
        We show below that for each $\omega\in \Omega_{b,T}$, the equation \eqref{eqn.ODEomega} has unique solution on $[S,T]$ for any $S \in [0,T]$.

		Let $\omega \in \cap_{n=k}^\infty \Omega_{b,T,n}$ for some $k$ and let $Y$ be a solution to \eqref{eqn.ODEomega} on $[S,T]$. By continuity, we can
        choose $N\ge k$ such that $B_N$ contains $Y_t$ and $X^{s,Y_s}_t(\omega)-W_t(\omega)$ for every $(s,t)\in\simpl{S}{T}$.
		Let $w$ be the control and $\beta$ be the constant found in \cref{prop.regularityabs}. 
		We note that $w$ depends on $N$. 
		Let $S \in (0,T]$ be a fixed but arbitrary number and define
		\begin{align*}
		 	F(t)=X^{t,Y_t}_\tau, \quad t\in[S,\tau].
		\end{align*}
		For $(s,t) \in \simpl{S}{\tau}$, we obtain by the semiflow property that
		\begin{align*}
			F(t)-F(s)=X^{t,Y_t}_\tau-X^{s,Y_s}_\tau
			=X^{t,Y_t}_\tau-X^{t,X^{s,Y_s}_t}_\tau.
		\end{align*}
		Then, using H\"older continuity of the semiflow (\cref{lem.semiflow}), we have
		\begin{align*}
			|F(t)-F(s)|\les |Y_t-X^{s,Y_s}_t|^\kappa,
		\end{align*}
		for some $\kappa\in(0,1)$ which can be chosen to be arbitrarily close to $1$. Applying \eqref{est.apriY}, we have
		\begin{align*}
			|F(t)-F(s)|\les w(s,t)^{\kappa \beta}.
		\end{align*}
        Choosing $\kappa$ so that $\kappa \beta>1$, for any sequence of partitions $\Pi_n=\{t^n_i\}_{i=0}^{N_n}$ of $[S,\tau]$ with mesh  converging to $0$, we get that
        \begin{align*}
            |F(\tau)-F(S)|\leqslant \sum_{i=1}^{N_n} |F(t^n_{i})-F(t^n_{i-1})|\les |\sup_{i\le N_n}w(t^n_{i-1},t^n_i) |^{\kappa \beta-1}w(0,\tau)\overset{n\rightarrow \infty}{\longrightarrow} 0.
        \end{align*}
        Hence $F(\tau)=F(S)$ or equivalently $Y_\tau=X^{S,Y_S}_\tau$.  
        Since $\tau$ was arbitrarily chosen in $(S,T]$, this means that $Y$ and $X^{S,Y_S}$ are identical on $[S,T]$.

        \textit{Step 2. Proof of part \ref{it.complete}}. 
        Define
        \begin{align*}
            \Omega_b=\cap_{T=1}^\infty \Omega_{b,T}.
        \end{align*}
        Note that $\Omega_b$ has full probability measure because each $\Omega_{b,T}$ does.  
        Let $(X^{s,x}_t)$ and $(\bar X^{s,x}_t)$ be random continuous, locally H\"older continuous in space semiflows
        respectively over $[0,T]$ and $[0,\bar T]$ for some $T<\bar T$. Then, for every $\omega\in \Omega_b$,  by
        continuity and Step 1, $\bar X|_{\simt\times\Rd}(\omega)=X(\omega)$.
        Hence, the semiflow can be uniquely extended to the whole positive real line, showing part \ref{it.complete}.

        \textit{Step 3.} Part \ref{it.pbpunique} is a direct consequence of steps 1 and 2.
        \end{proof}

          \begin{proof}[\textbf{Proof of \cref{cor.strcomplete}}]
        Let $(X^{s,x}_t)$ and $(\bar X^{s,x}_t)$ be random continuous semiflows respectively over $[0,T]$ and $[0,\bar T]$ for some $T<\bar T$. Then by path-by-path uniqueness $\bar X|_{\simt\times\Rd}=X$. This implies the existence (and uniqueness) of a random continuous semiflow over all nonnegative time.
    \end{proof}

\section{Wider implications} \label{sec:implications}

    This section is dedicated to consequences of \cref{lem.semiflow} and \cref{thm.main}. %

\subsection{Inverse flow and the solution to the backward in time SDE}
In view of \cref{rem:flow}, we can find an event $\Omega_b$ of full probability such that the conclusions of  \cref{thm.main} hold and for each $\omega\in\Omega_b$, the $W(\omega)$-driven ODE \eqref{eqn.ODEomega} generates a flow of homeomorphisms $(X_t^{(s,\cdot)}(\omega))_{s,t}$. 
We denote the inverse of $x\mapsto X_t^{s,x}(\omega)$ by $(X_t^{s,x}(\omega))^{-1}$.
For each fixed $\tau\in[0,1]$ and $x\in \Rd$, we consider the   $W(\omega)$-driven backward-in-time equation 
\begin{align}
     \label{backward-sde}
   	Z^{\tau,x} _{s}&=x-\int_s^\tau b(r,Z^{\tau,x}_{r})dr-(W_\tau- W_s)(\omega),\quad 0\leq s\leq \tau. 
\end{align}
Then, the inverse flow $s\mapsto(X_\tau^{s,x} )^{-1}$ and $s\mapsto Z^{\tau,x}_s$ coincide. Indeed, when $b$ is a regular function, the result is classical, see \cite[Chapters 3,4]{MR1472487}. The case when $b$ satisfies condition \eqref{LpLq} was shown in \cite{FF} through a transformed SDE resulting from a Zvonkin-type transformation.
Here, we provide a different argument for this relation under \eqref{LpLq}, which is valid for each $\omega\in \Omega_b$
In contrast to previous arguments, our proof relies solely on path-by-path uniqueness and the regularity of the flow of \eqref{eqn.ODEomega}.
\begin{theorem}
    \label{cor:flow-back}  Let $b$ fulfill \eqref{LpLq} for all $T>0$ and define $\Omega_b$ as previously. Then for every $(\tau,x,\omega)\in [0,\infty)\times\mR^d\times\Omega_{b}$, Equation \eqref{backward-sde} has a unique solution 
    given by $Z_s^{\tau,x}=(X_\tau^{s,x}(\omega))^{-1}$ for all $s\in[0, \tau]$.
\end{theorem}
\begin{proof}
    Let $\omega \in \Omega_b$. For each $s\le t\le \tau$, any solution $Z^{\tau,x}$ of \eqref{backward-sde} satisfies
    \begin{align*}
        Z^{\tau,x}_t-Z^{\tau,x}_s=\int_s^tb(r,Z^{\tau,x}_r)\dd r+W_t(\omega)-W_s(\omega),
    \end{align*}
    which implies that $Z^{\tau,x}$ is a solution to \eqref{eqn.ODEomega} starting at $Z^{\tau,x}_s$ at time $s$. Hence, by path-by-path uniqueness (\cref{thm.main}), we have $X^{s,Z^{\tau,x}_s}_t(\omega) =Z^{\tau,x}_t$ for all $t\in[s,\tau]$. For $t=\tau$, this yields $X^{s,Z^{\tau,x}_s}_\tau(\omega) =x$, which implies the result. 
\end{proof}

  \subsection{Random dynamical systems generated by solutions of singular  SDEs}\label{RDS:path-by-path}
   In general, to study systems that evolve over time under the influence of randomness—modeled by SDEs—one typically "lifts" the solution of an SDE into the framework of a random dynamical system (RDS). This lifting is commonly achieved by interpreting the SDE as a well-posed random ODE, such as \eqref{eqn.ODEomega}, in a path-by-path manner. For Lipschitz continuous drift
$b$, this approach is standard and well-documented (see \cite[Chapter 2.2]{MR1723992}). As discussed earlier and established in \cref{thm.main,cor:flow-back}, when 
$b$ satisfies condition \eqref{LpLq}, we can still ensure the path-by-path well-posedness of \eqref{eqn.ODEomega} in both directions of time. Consequently, under this assumption, the existence of an RDS induced by \eqref{eqn.ODEomega} follows directly via a standard argument which associates such random ODE to an RDS, without requiring further elaboration (cf. \cite[Theorem 2.2.2]{MR1723992}).

\subsection{SDEs with boundary conditions}\label{subsec:boundary}
              The following \cref{lem:BVproblem} generalizes results in \cite{NualartPardoux} to boundary value problems for an SDE with a drift that is possibly discontinuous (in contrast to a local Lipschitz condition required therein). We establish well-posedness for a class of such problems in \cref{lem:BVproblem}. The part on uniqueness follows along the same lines as the proof in \cite{NualartPardoux}; the proof of existence of a solution however follows easily due \cref{thm.main} by a fixed point theorem. We lay out the setting and reformulation of the problem below.
        
        For $\tau \in [0,T]$, we consider the boundary value problem
        \begin{align}\label{eq.BCsde}\left\{\begin{array}{cc}
                 \dd X_t =b(t,X_t)\dd t +\dd W_t,&\\
               F_0 X_0+F_1   X_\tau=c &
             \end{array}\right.
        \end{align}
        under the following assumption.
        \begin{assumption}\label{ass:BV}
        Let $b$  fulfill \eqref{LpLq} such that $|b|\1_{\Rd\setminus B_R }$ belongs to $\LL^1_\infty([0,T])$ for some $R>0$.
        Let $F_0,F_1$ be bounded linear mappings from $\mathbb{R}^d$ to $\mathbb{R}^d$ such that $F\coloneqq F_0+F_1$ has a bounded inverse and $c \in \mathbb{R}^d$.
        \end{assumption}
        \begin{theorem}
             \label{lem:BVproblem}\phantom{new line}
            \begin{enumerate}[(a)]
            \item \label{en:existence}Let \cref{ass:BV} hold. Then there exists a solution on $\Omega_b$ to \eqref{eq.BCsde} for any $\tau \in [0,T]$.
        \item \label{en:uniqueness}Let  $g(z)\coloneqq F^{-1}c-F^{-1}F_1z$. 
        If there exists $\lambda \in \mathbb{R}$ such that
        $b(t,\cdot)+\lambda \mathbbm{1}$ is monotone\footnote{A mapping $h:\mR^d\rightarrow\mR^d$ is said to be monotone if $\big(h(x)-h(y),x-y\big) \geq0$ for any $x,y\in\mR^d$.} for a.e. $t$ and
        \begin{align}\label{eq:ass-uniq}
            e^{\lambda\tau} &|g(z)-g(z')|\geqslant |z-z'+g(z)-g(z')|\implies g(z)=g(z');
        \end{align}
        or if both $b$ and $g$ are monotone, then (path-by-path) uniqueness holds.
        \end{enumerate}
        \end{theorem}
        \begin{proof}[Proof of \cref{lem:BVproblem}]
            Throughout the proof let $\omega \in \Omega_{b}$ for $\Omega_b$ as in \cref{thm.main}. 
            Because the entire argument is with this fixed $\omega$, hence we omit it in the notations.
            
            \ref{en:existence}: If $X$ is a solution to \eqref{eq.BCsde}, by path-by-path uniqueness $X_\cdot=X^{0,X_0}_{\cdot}$. Hence, showing existence of a solution is equivalent to showing that there exists $X_0 \in \mathbb{R}^d$ such that
            \begin{align}\label{eq:flowreformulation}
                F_0 X_0 +F_1 X^{0,X_0}_{\tau}=c.
            \end{align}
            Note that the above is equivalent to
            \begin{align}\label{eq:equiv}
                X_0=F^{-1}c-F^{-1}F_1(X_{\tau}^{0,X_0}-X_0).%
            \end{align}
            Hence, to find a solution to \eqref{eq.BCsde}, it is sufficient to prove that 
            \begin{align}\label{eq:f}
            f(x)=F^{-1}c-F^{-1}F_1(X^{0,x}_\tau-x)
            \end{align}
            has a fixed point.
         To do so, we apply Schauder's fixed point theorem; 
         which requires that $f$ is continuous compact (mapping bounded sets to relatively compact sets) and that the set 
         \begin{align}\label{eq:boundedset}
        \{x: x=\rho f(x) \text{ for some } \rho \in (0,1]\}
        \end{align}
        is bounded.
        Continuity of $f$ follows immediately from continuity of the flow (see \cref{thm.main}\ref{it.complete}).
        To show that $f$ is compact and the set in \eqref{eq:boundedset} is bounded, it suffices to show that $f$ is bounded on $\Rd$.
        
        To see this, putting $b^1=b\1_{B_R}$ and $b^2=b\1_{\Rd\setminus B_R}$, we consider $x$ with 
        \begin{align}\label{ass:x}
            |x|>\bar R:= 1+R+\tau \|b^2\|_{\LL^1_\infty([0,T])} +\sup_{s \in [0,\tau]}|W_s|,
        \end{align}
        where $R>0$ is as in \cref{ass:BV}. 
        This ensures that $X_r^{0,x} \notin B_R$ for all $r \in [0,\tau]$. Indeed, let $\tau_R$ be the hitting time of $B_R$ for $X_\cdot^{0,x}$ and assume that $\tau_R\le \tau$. By continuity, we have $|X^{0,x}_{\tau_R}|=R$. On the other hand,  for $r< \tau_R$, we have
        \begin{align*}
            |X_r^{0,x}|&=\Big|x +\int_0^r b^2(s,X_s^{0,x}) \dd s + W_r\Big|\\
            &\geqslant |x|-\Big|\int_0^r b^2(s,X_s^{0,x})\dd s\Big|-|W_r|\ge R+1
        \end{align*}
        where we used \eqref{ass:x} in the last line. This implies that $|X_{\tau_R}^{0,x}|>R$, a contradiction. Thus, one has $\tau_R>\tau$, in other words, $X_r^{0,x} \notin B_R$ for all $r \in [0,\tau]$.
        It follows that
        \begin{align}\label{eq:boundingf}
            |f(x)|&\leqslant |F^{-1}c|+\|F^{-1}F_1\||X_\tau^{0,x}-x|\nonumber\\
            &\leqslant |F^{-1}c|+\|F^{-1}F_1\|\left(\int_0^\tau |b^2|(r,X^{0,x}_r) \dr + |W_\tau|\right)\nonumber\\
            &\leqslant |F^{-1}c|+\|F^{-1}F_1\|\left( \|b^2\|_{\LL^1_\infty}  + |W_\tau|\right),
        \end{align}
        which is valid for all $x$ satisfying \eqref{ass:x},
        implying that $f(\Rd\setminus B_{\bar R})$ is bounded.
        It remains to show that $f(B_{\bar R})$ is bounded.
        Let $x \in B_{\bar R}$ and let $\sigma_{\bar R}\coloneqq \sup\{s \in [0,\tau]:|X_s^{0,x}|\leqslant \bar R\}$. 
        We have $|X_{\sigma_{\bar R}}^{0,x}|=\bar R$ (by continuity) and  $|X_r^{0,x}|>\bar R>R$ for all $r\in(\sigma_{\bar R},\tau]$. Hence, using the identity $X_\tau^{0,x}=X_{\sigma_{\bar R}}^{0,x}+\int_{\sigma_{\bar R}}^\tau b^2(r,X_r^{0,x})\dr+W_\tau-W_{\sigma_{\bar R}}$ and triangle inequality, we have 
        \begin{align*}
           |f(x)|&\leqslant |F^{-1}c|+\|F^{-1}F_1\|(|X_\tau^{0,x}|+|x|)\\
           &\leqslant |F^{-1}c|+\|F^{-1}F_1\|\Big(|\bar R|+\int_{\sigma_{\bar R}}^\tau |b^2|(r,X_r^{0,x}) \dr +|W_{\tau}-W_{\sigma_{\bar R}}|+|\bar R|\Big)\\
           &\leqslant |F^{-1}c|+\|F^{-1}F_1\|(2|\bar R|+\|b^2\|_{\LL^1_\infty} +2\sup_{s \in [0,\tau]}|W_s|).
        \end{align*}
        This implies that $f(B_{\bar R})$ is bounded. We conclude that $f$ is bounded on $\Rd$, completing the proof.
        
        \ref{en:uniqueness}: By path-by-path uniqueness (\cref{thm.main}), any solution to \eqref{eq.BCsde} is of the form $X^{0,x}_\cdot$ for some $x$. 
        Let $x,\tilde{x}$ be such that $X^{0,x}_\cdot$ and $X^{0,\tilde{x}}_\cdot$ fulfill \eqref{eq:flowreformulation}.
        Let $\lambda$ be such that the monotonicity condition and \eqref{eq:ass-uniq} holds. Then by Lebesgue differentiation theorem, for a.e. $t$, we have
        \begin{align*}
            \frac{d}{dt}(&e^{-2\lambda t} \big|X_t^{0,x}-X_t^{0,\tilde{x}}\big|^2)\\
            &+2 e^{-2\lambda t}\left( \lambda\big|X_t^{0,x}-X_t^{0,\tilde{x}}\big|^2 + \langle b(t,X_t^{0,x})-b(t,X_t^{0,\tilde{x}}),X_t^{0,x}-\tilde{X}_t^{0,\tilde{x}}\rangle\right)=0.
        \end{align*}
        Therefore by the monotonicity assumption on $b(t,\cdot)+\lambda \mathbbm{1}$,
        \begin{align*}
            \frac{d}{dt}(e^{-2\lambda t} \big|X_t^{0,x}-X_t^{0,\tilde{x}}\big|^2)\leqslant 0
        \end{align*}
        giving that
        \begin{align} \label{eq:gz}
            |X_\tau^{0,x}-X_\tau^{0,\tilde{x}}|\leqslant e^{\lambda \tau} |x-\tilde{x}|.
        \end{align}
        As $X^{0,x}$ and $X^{0,\tilde{x}}$ are solutions, by \eqref{eq:equiv}, we have that
        $g(X_\tau^{0,x}-x)=x$ and $g(X_\tau^{0,\tilde{x}}-\tilde{x})=\tilde{x}$, so that the assumption in \eqref{eq:ass-uniq} is fulfilled for $z=X^{0,x}_\tau-x$ and $z'=X^{0,\tilde{x}}_\tau-\tilde{x}$. This implies that $x=\tilde{x}$. The proof of uniqueness when $b$ and $g$ are monotone follows along similar lines (see \cite[Remark 2.4]{NualartPardoux}).
        \end{proof}

\begin{appendix}
\section{ODE uniqueness} %
\label{sec:ode_uniqueness}
    Most of the arguments in \cref{sec:proof_main_results} are independent from the probability space, and hence, independent from the probability law of the driving noise. This forms a uniqueness criterion for ODE  \eqref{ODE} which does not require any regularity on the vector field, but instead relies on the regularizing effect of the driving signal $\gamma$ with \cref{lem.nonlinYoung} being a prototype. This regularizing effect is formalized by the following definition.
    \begin{definition}\label{def.gregularizing}
        Let $f:[0,T]\times\Rd\to\Rd$ be a measurable function. We say that $\gamma$ is $(1- \varepsilon, \alpha)$-regularizing for $f$ if there exist a control $\eta$ and constants $\Xi_{T,R}$ for each $R>0$ such that 
        \begin{align*}
            \Big|\int_s^t [f(r,\gamma_r+ \psi_r)- f(r,\gamma_r+ \psi_s)]\dd r\Big|\le \Xi_{T,R} [\psi]_{\var;[s,t]}^{1- \varepsilon} \eta(s,t)^{\alpha}
        \end{align*}
        for every $(s,t)\in\simt$ and every $\psi:[0,T]\to B_R$ of finite variation.
    \end{definition}
        
    \begin{theorem}\label{thm.ODEuniq}
        Let $R>0$. 
        Suppose that $\gamma$ is $(1- \varepsilon, \alpha)$-regularizing for $b,|b|$. 
        Let $y$ be a solution to \eqref{ODE}. 
        Suppose that $ \phi^{s,y_s}_t-\gamma_t$ belongs to $B_R$ for every $(s,t)\in\simt$. 
        Then, there exists  a control $w$ which depends only on $y,\Xi_{T,R}, \eta$
        such that
        \begin{align}\label{est.apriY2}
            |y_t- \phi^{s,y_s}_t|\le w(s,t)^{\min(1- \varepsilon+ \alpha, \frac \alpha \varepsilon)} \quad \forall (s,t)\in\simt.
        \end{align}
        Suppose furthermore that $\phi$ is locally $\kappa$-H\"older continuous for some $\kappa\in(0,1]$ such that
        \begin{align*}
            \kappa \cdot\min(1- \varepsilon+ \alpha, \frac \alpha \varepsilon)>1.
        \end{align*}
        Then   $(y_t)_{t\in[0,T]}$  is identical to $(\phi^{0,y_0}_t)_{t\in[0,T]}$.    
    \end{theorem}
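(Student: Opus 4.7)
The plan is to transcribe the proofs of \cref{prop.regularityabs,thm.main} to this deterministic setting, with \cref{def.gregularizing} playing the role of \cref{lem.nonlinYoung}. I will first establish the a priori estimate \eqref{est.apriY2}, then identify $y$ with $\phi^{0,y_0}$ through the standard semiflow comparison using local H\"older regularity.

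For the a priori bound, set $\psi_r := y_r - \gamma_r$ and $\xi^{s,y_s}_r := \phi^{s,y_s}_{\min(s,r)} - \gamma_{\min(s,r)}$, both valued in $K$ and of finite variation with $\xi^{s,y_s}_s = \psi_s$, $[\xi^{s,y_s}]_{\var;[s,t]} \le \int_s^t |b|(r,\phi^{s,y_s}_r)\dd r$ and $[\psi]_{\var;[s,t]} \le \int_s^t |b|(r,y_r)\dd r$. Following Steps 1--2 of the proof of \cref{prop.regularityabs}, I decompose
\begin{align*}
\int_s^t \bigl[|b|(r,\phi^{s,y_s}_r)-|b|(r,y_r)\bigr]\dd r &= \int_s^t \bigl[|b|(r,\gamma_r + \xi^{s,y_s}_r) - |b|(r,\gamma_r+\xi^{s,y_s}_s)\bigr]\dd r \\
&\quad - \int_s^t \bigl[|b|(r,\gamma_r+\psi_r) - |b|(r,\gamma_r+\psi_s)\bigr]\dd r,
\end{align*}
apply \cref{def.gregularizing} with $f=|b|$ to each piece, and use $(a+b)^{1-\varepsilon}\le a^{1-\varepsilon}+b^{1-\varepsilon}$ with Young's inequality exactly as in the transition \eqref{eq:triangle}--\eqref{ieq:Young} to absorb the resulting self-referential $|\int_s^t[\,\cdot\,]\dd r|^{1-\varepsilon}$ term, yielding the analogue of \eqref{est.absb}. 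Writing next $y_t - \phi^{s,y_s}_t = \int_s^t [b(r,\gamma_r+\psi_r) - b(r,\gamma_r+\xi^{s,y_s}_r)]\dd r$, anchoring both integrands at the common point $\psi_s=\xi^{s,y_s}_s$, applying \cref{def.gregularizing} with $f=b$, and combining with the previous bound produces
\begin{align*}
|y_t - \phi^{s,y_s}_t| \lesssim \Bigl(\int_s^t |b|(r,y_r)\dd r\Bigr)^{1-\varepsilon}\eta(s,t)^\alpha + \eta(s,t)^{\alpha/\varepsilon}.
\end{align*}
Setting $w(s,t) := \eta(s,t) + \int_s^t |b|(r,y_r)\dd r$, a sum of two controls and hence a control, then yields \eqref{est.apriY2} with exponent $\beta := \min(1-\varepsilon+\alpha,\alpha/\varepsilon)$, and $w$ depends only on $y$, $\Xi_{T,K}$ and $\eta$ as required.

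For the uniqueness assertion I mimic the closing argument of \cref{thm.main}: fix $\tau\in(0,T]$, set $F(t) := \phi^{t,y_t}_\tau$ on $[0,\tau]$, and use the semiflow identity $F(s) = \phi^{s,y_s}_\tau = \phi^{t,\phi^{s,y_s}_t}_\tau$ together with local $\kappa$-H\"older continuity of $\phi$ and \eqref{est.apriY2} to get $|F(t)-F(s)|\lesssim |y_t - \phi^{s,y_s}_t|^\kappa \lesssim w(s,t)^{\kappa\beta}$. Since $\kappa\beta>1$ by hypothesis and $w$ is a control, telescoping over a sequence of partitions of $[0,\tau]$ with vanishing mesh forces $F(\tau)=F(0)$, i.e.\ $y_\tau = \phi^{0,y_0}_\tau$; arbitrariness of $\tau\in(0,T]$ then identifies the two trajectories on $[0,T]$. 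The main obstacle is the bookkeeping in the a priori step: the quantity $\int_s^t |b|(r,y_r)\dd r$ appears simultaneously as the variation gating \cref{def.gregularizing} and as the unknown to be controlled on the left-hand side, so the triangle-plus-Young absorption must be performed twice (once for $|b|$ to produce the analogue of \eqref{est.absb}, once for $b$ to close the estimate), and the exponent arithmetic yielding $\min(1-\varepsilon+\alpha,\alpha/\varepsilon)$ must be tracked carefully through the two compositions.
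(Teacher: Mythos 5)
Your proposal is correct and is essentially the paper's own proof: the paper explicitly omits the argument for this theorem, stating that it follows the proofs of \cref{prop.regularityabs,thm.main}, and your transcription (anchored decomposition via the $(1-\varepsilon,\alpha)$-regularizing property, triangle-plus-Young absorption to get the analogue of \eqref{est.absb}, then the semiflow comparison $F(t)=\phi^{t,y_t}_\tau$ with $\kappa\beta>1$ and a vanishing-mesh telescoping) is exactly that transcription, with the exponent $\min(1-\varepsilon+\alpha,\alpha/\varepsilon)$ and the control $w=\eta+\int_s^t|b|(r,y_r)\dd r$ tracked correctly.
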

    The proof of this result follows analogous arguments used in proving \cref{prop.regularityabs,thm.main} and hence is omitted. 
 
\section{Adapted solutions and path-by-path solutions}\label{pbpvspathwise}
This section is devoted to summarize different concepts of solutions and uniqueness to \eqref{SDE} where $b: [0,T]\times \mathbb{R}^d \rightarrow \mathbb{R}^d$ is measurable. Even though the concept of adapted solutions is by now well covered in textbooks, we also recall these notions in order to make a clear comparison to path-by-path solutions and path-by-path uniqueness (for a definition thereof see \cref{def.unipbp}. In particular, we state and prove \cref{thm:pbp} to point out the connection between path-by-path uniqueness and adaptedness of solutions.

\begin{definition}[Existence]\phantom{newline}
\begin{enumerate}[(i)]
\item If there exists a filtered probability 
space $(\Omega,\mathcal{F},(\mathcal{F}_t)_{t \in [0,T]},\mathbb{P})$ equipped with a Brownian motion $(W_t)_{t \in [0,T]}$ and an $(\mathcal{F}_t)$-adapted process $(X_t)_{t \in [0,T]}$ such that $\int_0^T |b(s,X_s)|\dd s<\infty$ and $(X,W)$ fulfills \eqref{SDE} almost surely, we say that $(X,W)$ is a \emph{weak solution} to \eqref{SDE}. If the choice of $W$ is clear from the context, we write that $X$ is a weak solution.

\item We call $X$ a \emph{strong solution} if $X$ is a weak solution and $(X_t)_{t\in[0,T]}$ is adapted to the filtration $(\mathcal{F}_t^W)_{t\in[0,T]}$.

\item Let $(\Omega,\mathcal{F},\mathbb{P})$ be a probability space on which a Brownian motion $W$ is defined. We call a mapping $X\colon\Omega \rightarrow \mathcal{C}([0,T])$ a \emph{path-by-path solution} to \eqref{SDE} if there exists a set $\tilde{\Omega}\subset \Omega$ of full measure such that, for every $\omega \in \tilde{\Omega}$, $X(\omega)$ is a solution to \eqref{eqn.ODEomega}.
\end{enumerate}

\end{definition}
\begin{definition}[Uniqueness]\phantom{newline}
\begin{enumerate}[(i)]
\item We say that \emph{pathwise uniqueness} for \eqref{SDE} holds if for any two weak solutions $(X,W), (\tilde{X},W)$ defined on the same filtered probability space with the same Brownian motion $W$ and the same initial condition, $X$ and $\tilde{X}$ are indistinguishable.

\end{enumerate}
\end{definition}

The following implications follow directly from the 
definitions:
\begin{center}
\begin{tikzpicture}
\draw (0,-0.3) rectangle (3,0.3) node[pos=.5] {strong existence};
\draw (4,-0.3) rectangle (7,0.3) node[pos=.5] {weak existence};
\draw (8,-0.3) rectangle (12.2,0.3) node[pos=.5] {path-by-path existence};
\draw[->,double equal sign distance] (3.2,0) -- (3.8,0);
\draw[->,double equal sign distance] (7.2,0) -- (7.8,0);
\draw (1.5,-1.3) rectangle (6,-0.7) node[pos=.5] {path-by-path uniqueness};
\draw[->,double equal sign distance] (6.2,-1) -- (6.8,-1);
\draw (7,-1.3) rectangle (10.7,-0.7) node[pos=.5] {pathwise uniqueness};
\end{tikzpicture}
\end{center}
The following theorem can be summarized in the following way: For drifts for which pathwise uniqueness is known to hold, losing path-by-path uniqueness implies existence of a non-adapted solution. Note that the theorem can be formulated with random initial condition if assuming \emph{uniform} path-by-path uniqueness.
 \begin{theorem}\label{thm:pbp}
Consider \eqref{SDE} with deterministic initial condition and $b:[0,T]\times \mathbb{R}^d \rightarrow \mathbb{R}^d$ measurable.
Assume that 
\begin{itemize}
    \item[(i)] pathwise uniqueness to \eqref{SDE} holds,
    \item[(ii)] all path-by-path solutions to \eqref{SDE} are weak solutions.
\end{itemize}
Then path-by-path uniqueness holds.
\end{theorem}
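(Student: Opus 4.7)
The plan is to proceed by contradiction: assume that path-by-path uniqueness fails for \eqref{SDE} and derive a contradiction with hypothesis (i).

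First, from the negation of path-by-path uniqueness, I would extract a probability space $(\Omega,\mathcal{F},\mathbb{P})$ carrying a Brownian motion $W$, an initial point $x_0\in\mathbb{R}^d$, and a measurable set $A\subset\Omega$ of positive probability such that for every $\omega\in A$ the ODE \eqref{eqn.ODEomega} started at $x_0$ with driving path $W(\omega)$ admits at least two distinct solutions on $[0,T]$. A measurable selection argument (e.g.\ Kuratowski--Ryll-Nardzewski, applied to the Borel multifunction sending $\omega$ to its nonempty solution set in $\mathcal{C}([0,T];\mathbb{R}^d)$) then produces two path-by-path solutions $X^1,X^2\colon\Omega\to\mathcal{C}([0,T];\mathbb{R}^d)$ with common initial value $x_0$ such that $X^1(\omega)\neq X^2(\omega)$ for every $\omega\in A$.

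By hypothesis (ii), both $X^1$ and $X^2$ are weak solutions to \eqref{SDE} on $(\Omega,\mathcal{F},\mathbb{P})$ driven by $W$. The crucial step is then a Yamada--Watanabe-type argument: pathwise uniqueness (i) together with the weak existence supplied by (ii) yield a unique strong solution $Z$ starting at $x_0$, adapted to the $\mathbb{P}$-augmented Brownian filtration $(\mathcal{F}^W_t)$. If $(\mathcal{F}^i_t)$ denotes a filtration on $\Omega$ making $X^i$ adapted and $W$ a Brownian motion, then $(\mathcal{F}^W_t)\subset(\mathcal{F}^i_t)$, so $Z$ is also adapted to $(\mathcal{F}^i_t)$. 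Hence $X^i$ and $Z$ are two weak solutions on $(\Omega,\mathcal{F},(\mathcal{F}^i_t),\mathbb{P})$ with the same Brownian motion and same initial value, and pathwise uniqueness forces $X^i=Z$ indistinguishably for $i=1,2$. Consequently $X^1=X^2$ almost surely, contradicting $\mathbb{P}(A)>0$.

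The main obstacle is the Yamada--Watanabe step upgrading weak to strong solutions; without it, $X^1$ and $X^2$ live a priori on different filtrations and pathwise uniqueness cannot be invoked for them jointly. A self-contained alternative would be to construct a single filtration with respect to which $W$ is a Brownian motion and both $X^1,X^2$ are adapted, but this reduces to an independence check on $W_t-W_s$ relative to the joint past $\sigma(X^1_r,X^2_r,W_r:r\le s)$ that is essentially the content of Yamada--Watanabe in this setting. A minor technicality is the Borel measurability of the selection on $A$, which is routine thanks to the Polish structure of the path space.
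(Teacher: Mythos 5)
Your overall skeleton matches the paper's: negate path-by-path uniqueness, manufacture two path-by-path solutions that differ on a set of positive measure, upgrade them to weak solutions via (ii), and contradict pathwise uniqueness. Two remarks on where you diverge. The measurable-selection step is unnecessary (and would itself require proving measurability of the solution multifunction): the paper's definition of a path-by-path solution asks only for a mapping $\Omega\to\mathcal{C}([0,T])$, with no measurability, so the axiom of choice suffices and hypothesis (ii) then supplies adaptedness for free. Your final step via Yamada--Watanabe (comparing each $X^i$ with the strong solution $Z$, which is adapted to $\mathcal{F}^W_t\subset\mathcal{F}^i_t$) is a legitimate alternative to the paper's argument, which instead places $X^1,X^2$ on the filtration generated by $\mathcal{F}^1_t\cup\mathcal{F}^2_t$ and applies pathwise uniqueness to them directly; your route even sidesteps having to check that $W$ remains a Brownian motion for the joined filtration.

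The genuine gap is at the start: from the failure of path-by-path uniqueness you extract a single deterministic initial point $x_0$ and a positive-measure set $A$ on which multiplicity occurs for that fixed $x_0$. The definition used in this paper requires the exceptional null set to be uniform over all initial conditions (the paper explicitly contrasts this with the weaker definition where the null set may depend on $x_0$). Its negation therefore only yields that the set of $\omega$ admitting some initial point $x_0(\omega)$, depending on $\omega$, with two distinct solutions is non-negligible; ruling out a positive-measure bad set for each fixed $x_0$, which is what your argument accomplishes, proves only the weaker $x_0$-dependent form of path-by-path uniqueness, since an uncountable union of null sets need not be null. The fix is to let the initial value vary with $\omega$, as the paper implicitly does: take $X^1,X^2$ with $X^1_0=X^2_0=:\xi$ pointwise, note that (ii) makes $\xi$ an $\mathcal{F}^i_0$-measurable random variable, and then either run your Yamada--Watanabe step in its functional form ($X^i=F(\xi,W)$ a.s.) or follow the paper and apply pathwise uniqueness to $X^1,X^2$ on a common filtration. (Both your write-up and the paper tacitly treat the failure as one of multiplicity rather than non-existence, so I do not count that against you.)
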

\begin{proof}
Assume that path-by-path uniqueness does not hold. Let $(\Omega,\mathcal{F},\mathbb{P})$ be a probability space on which a Brownian motion $W$ is defined. Then there exists a set $A\subset \Omega$ of positive measure such that for $\omega \in A$, there exist multiple solutions to
\begin{align*}
X_t(\omega)=X_0+\int_0^t b(s,X_s(\omega)) \dd s + W_t(\omega),
\end{align*}
with the same initial condition. Then we can define two path-by-path solutions $X^1$ and $X^2$ to \eqref{SDE} by letting them agree with the unique strong solution on $A^{\mathsf{c}}$ and letting $X^1(\omega)\neq X^2(\omega)$ on $A$, which is possible by the above and the axiom of choice.
By assumption, both $X^1$ and $X^2$ are adapted w.r.t. filtrations $(\mathcal{F}^1_t)$ and $(\mathcal{F}^2_t)$ such that $W$ is a Brownian motion w.r.t. these filtrations. Hence, $W$ is also a Brownian motion with respect to $(\mathcal{F}^1_t \cup \mathcal{F}^2_t)$. Therefore we constructed two solutions on the same filtered probability space $(\Omega,\mathcal{F},(\mathcal{F}^1_t\cup \mathcal{F}^2_t)_{t\in[0,T]},\mathbb{P})$. By construction $X^1\neq X^2$ on a set of positive measure. This contradicts pathwise uniqueness.
\end{proof}

\begin{remark}

There are examples of drifts $b$ not fulfilling \eqref{LpLq} such that there exist multiple (non-adapted) path-by-path solutions to \eqref{SDE}, even though pathwise uniqueness holds (see \cite{ShaWre} for $d>1$ and \cite{Anzeletti} for $d=1$). These counterexamples heavily rely on the time-dependence of the drift. This is leading to non-uniqueness globally (i.e. considering the equation on the whole interval $[0,T]$). We are not aware of examples of drifts for which such a phenomenon occurs locally. Finally, note that \cref{thm.main} prevents such counterexamples for drifts fulfilling \eqref{LpLq} since path-by-path solutions are identified on a set of full measure.
\end{remark}

\end{appendix}

\section*{Acknowledgments}
The authors thank Toyomu Matsuda for numerous suggestions which improve the quality of the paper and  Oleg Butkovsky for suggesting \cref{sec:ode_uniqueness}. Discussion on strong completeness with Prof. Scheutzow is acknowledged.
\section*{Funding} %
LA acknowledges the support of the Labex de Math\'ematique Hadamard and of the Austrian Science Fund (FWF) via program P34992. KL is supported by EPSRC [grant number EP/Y016955/1], CL is supported by the Austrian Science Fund (FWF) Stand-Alone program P34992.  CL also acknowledges the financial support from DFG via Research Unit FOR 2402 when this project started. This research was funded in whole or in part by the Austrian Science Fund (FWF) \href{https://www.fwf.ac.at/en/research-radar/10.55776/P34992}{10.55776/P34992}. For open access purposes, the author has applied a CC BY public copyright license to any author-accepted manuscript version arising from this submission.


\begin{thebibliography}{BFGM19}

\bibitem[ABLM24]{athreya2020well}
Siva Athreya, Oleg Butkovsky, Khoa L\^{e}, and Leonid Mytnik.
\newblock Well-posedness of stochastic heat equation with distributional drift
  and skew stochastic heat equation.
\newblock {\em Comm. Pure Appl. Math.}, 77(5):2708--2777, 2024.

\bibitem[AFN95]{MR1349171}
Aureli Alabert, Marco Ferrante, and David Nualart.
\newblock Markov field property of stochastic differential equations.
\newblock {\em Ann. Probab.}, 23(3):1262--1288, 1995.

\bibitem[Anz24]{Anzeletti}
Lukas Anzeletti.
\newblock Comparison of classical and path-by-path solutions to {SDE}s.
\newblock {\em Stoch. Dyn.}, 24(7):Paper No. 2450049, 13, 2024.

\bibitem[Arn98]{MR1723992}
Ludwig Arnold.
\newblock {\em Random dynamical systems}.
\newblock Springer Monographs in Mathematics. Springer-Verlag, Berlin, 1998.

\bibitem[ART23]{anzeletti2021regularisation}
Lukas Anzeletti, Alexandre Richard, and Etienne Tanr\'{e}.
\newblock Regularisation by fractional noise for one-dimensional differential
  equations with distributional drift.
\newblock {\em Electron. J. Probab.}, 28:Paper No. 135, 49, 2023.

\bibitem[AS95]{MR1314175}
Ludwig Arnold and Michael Scheutzow.
\newblock Perfect cocycles through stochastic differential equations.
\newblock {\em Probab. Theory Related Fields}, 101(1):65--88, 1995.

\bibitem[BC01]{BC}
Richard~F. Bass and Zhenqing Chen.
\newblock Stochastic differential equations for {D}irichlet processes.
\newblock {\em Probab. Theory Related Fields}, 121(3):422--446, 2001.

\bibitem[BFGM19]{BeckFlandoli}
Lisa Beck, Franco Flandoli, Massimiliano Gubinelli, and Mario Maurelli.
\newblock Stochastic {ODE}s and stochastic linear {PDE}s with critical drift:
  regularity, duality and uniqueness.
\newblock {\em Electron. J. Probab.}, 24:Paper No. 136, 72, 2019.

\bibitem[BLM25]{butkovsky2023stochastic}
Oleg Butkovsky, Khoa Lê, and Leonid Mytnik.
\newblock Stochastic equations with singular drift driven by fractional
  Brownian motion.
\newblock {\em Probability and Mathematical Physics}, 6(3):P05, 2025.

\bibitem[BNP19]{banos2015strong}
David Ba{\~n}os, Torstein Nilssen, and Frank Proske.
\newblock Strong existence and higher order {F}r{\'e}chet differentiability of
  stochastic flows of fractional {B}rownian motion driven {SDE}s with singular
  drift.
\newblock {\em J. Dyn. Diff. Equat.}, pages 1--48, 2019.

\bibitem[CG16]{CatellierGubinelli}
R\'{e}mi Catellier and Massimiliano Gubinelli.
\newblock Averaging along irregular curves and regularisation of {ODE}s.
\newblock {\em Stochastic Process. Appl.}, 126(8):2323--2366, 2016.

\bibitem[CI08]{MR2376844}
Peter Constantin and Gautam Iyer.
\newblock A stochastic {L}agrangian representation of the three-dimensional
  incompressible {N}avier-{S}tokes equations.
\newblock {\em Comm. Pure Appl. Math.}, 61(3):330--345, 2008.

\bibitem[Dav07]{Davie}
Alexander~M. Davie.
\newblock {Uniqueness of solutions of stochastic differential equations}.
\newblock {\em Int. Math. Res. Not. IMRN}, 2007(24):Art. ID rnm124, 26, 2007.

\bibitem[DG24]{DG22}
Konstantinos Dareiotis and M\'{a}t\'{e} Gerencs\'{e}r.
\newblock Path-by-path regularisation through multiplicative noise in rough,
  {Y}oung, and ordinary differential equations.
\newblock {\em Ann. Probab.}, 52(5):1864--1902, 2024.

\bibitem[FF13a]{MR3017266}
E.~Fedrizzi and F.~Flandoli.
\newblock Noise prevents singularities in linear transport equations.
\newblock {\em J. Funct. Anal.}, 264(6):1329--1354, 2013.

\bibitem[FF13b]{FF}
Ennio Fedrizzi and Franco Flandoli.
\newblock H\"{o}lder flow and differentiability for {SDE}s with nonregular
  drift.
\newblock {\em Stoch. Anal. Appl.}, 31(4):708--736, 2013.

\bibitem[FF13c]{FF12}
Ennio Fedrizzi and Franco Flandoli.
\newblock Noise prevents singularities in linear transport equations.
\newblock {\em J. Funct. Anal.}, 264(6):1329--1354, 2013.

\bibitem[FGP10]{MR2593276}
F.~Flandoli, M.~Gubinelli, and E.~Priola.
\newblock Well-posedness of the transport equation by stochastic perturbation.
\newblock {\em Invent. Math.}, 180(1):1--53, 2010.

\bibitem[FH14]{FH}
Peter~K. Friz and Martin Hairer.
\newblock {\em A course on rough paths}.
\newblock Universitext. Springer, Cham, 2014.
\newblock With an introduction to regularity structures.

\bibitem[Fla11a]{Flandoli}
Franco Flandoli.
\newblock {\em Random perturbation of {PDE}s and fluid dynamic models}, volume
  2015 of {\em Lecture Notes in Mathematics}.
\newblock Springer, Heidelberg, 2011.
\newblock Lectures from the 40th Probability Summer School held in Saint-Flour,
  2010.

\bibitem[Fla11b]{F}
Franco Flandoli.
\newblock Regularizing properties of {B}rownian paths and a result of {D}avie.
\newblock {\em Stoch. Dyn.}, 11(2-3):323--331, 2011.

\bibitem[FV10]{friz2010multidimensional}
Peter~K Friz and Nicolas~B Victoir.
\newblock {\em Multidimensional stochastic processes as rough paths: theory and
  applications}, volume 120.
\newblock Cambridge University Press, 2010.

\bibitem[GG22]{GaleatiGubinelli}
Lucio Galeati and Massimiliano Gubinelli.
\newblock Noiseless regularisation by noise.
\newblock {\em Rev. Mat. Iberoam.}, 38(2):433--502, 2022.

\bibitem[GG25]{GaleatiGerencser}
Lucio Galeati and M\'{a}t\'{e} Gerencs\'{e}r.
\newblock Solution theory of fractional {SDE}s in complete subcritical regimes.
\newblock {\em Forum Math. Sigma}, 13:Paper No. e12, 66, 2025.

\bibitem[GL23]{GL}
Lucio Galeati and Chengcheng Ling.
\newblock { Stability estimates for singular SDEs and applications}.
\newblock {\em Electron. J. Probab.}, 28:1--31, 2023.

\bibitem[HL17]{MR3581224}
Yaozhong Hu and Khoa L\^{e}.
\newblock Nonlinear {Y}oung integrals and differential systems in {H}\"{o}lder
  media.
\newblock {\em Trans. Amer. Math. Soc.}, 369(3):1935--2002, 2017.

\bibitem[HP21]{HarangPerkowski}
Fabian~Andsem Harang and Nicolas Perkowski.
\newblock {$C^\infty$}-regularization of {ODE}s perturbed by noise.
\newblock {\em Stoch. Dyn.}, 21(8):Paper No. 2140010, 29, 2021.

\bibitem[KR05]{Kr-Ro}
Nikolay~V. Krylov and Michael R\"{o}ckner.
\newblock Strong solutions of stochastic equations with singular time dependent
  drift.
\newblock {\em Probab. Theory Related Fields}, 131(2):154--196, 2005.

\bibitem[Kun97]{MR1472487}
Hiroshi Kunita.
\newblock {\em Stochastic flows and stochastic differential equations},
  volume~24 of {\em Cambridge Studies in Advanced Mathematics}.
\newblock Cambridge University Press, Cambridge, 1997.
\newblock Reprint of the 1990 original.

\bibitem[L{\^e}20]{le2020stochastic}
Khoa L{\^e}.
\newblock A stochastic sewing lemma and applications.
\newblock {\em Electron. J. Probab.}, 25:1--55, 2020.

\bibitem[L{\^e}22]{Le2022}
Khoa L{\^e}.
\newblock Quantitative {{J}ohn--{N}irenberg} inequality for stochastic process
  of bounded mean oscilation.
\newblock {\em arXiv preprint arXiv:2210.15736}, 2022.

\bibitem[Li94]{MR1305784}
Xuemei Li.
\newblock Strong {$p$}-completeness of stochastic differential equations and
  the existence of smooth flows on noncompact manifolds.
\newblock {\em Probab. Theory Related Fields}, 100(4):485--511, 1994.

\bibitem[Liz70]{MR0262815}
Pëtr~I. Lizorkin.
\newblock Multipliers of {F}ourier integrals and estimates of convolutions in
  spaces with mixed norm. {A}pplications.
\newblock {\em Izv. Akad. Nauk SSSR Ser. Mat.}, 34:218--247, 1970.

\bibitem[LL21]{LL}
Khoa L{\^e} and Chengcheng Ling.
\newblock {Taming singular stochastic differential equations: A numerical
  method}.
\newblock {\em arXiv preprint arXiv:2110.01343, accepted to The Annals of
  Probability}, 2021.

\bibitem[LM23]{lejay:hal-03626402}
Antoine Lejay and Renaud Marty.
\newblock {Rough differential equations with affine boundary conditions}.
\newblock working paper or preprint, February 2023.

\bibitem[LS11]{LS11}
Xuemei Li and Michael Scheutzow.
\newblock Lack of strong completeness for stochastic flows.
\newblock {\em Ann. Probab.}, 39(4):1407--1421, 2011.

\bibitem[LSV22]{MR4431447}
Chengcheng Ling, Michael Scheutzow, and Isabell Vorkastner.
\newblock The perfection of local semi-flows and local random dynamical systems
  with applications to {SDE}s.
\newblock {\em Stoch. Dyn.}, 22(2):Paper No. 2240010, 17, 2022.

\bibitem[Lyo98]{MR1654527}
Terry~J. Lyons.
\newblock Differential equations driven by rough signals.
\newblock {\em Rev. Mat. Iberoamericana}, 14(2):215--310, 1998.

\bibitem[MS96]{MR1373727}
Salah-Eldin~A. Mohammed and Michael K.~R. Scheutzow.
\newblock Lyapunov exponents of linear stochastic functional differential
  equations driven by semimartingales. {I}. {T}he multiplicative ergodic
  theory.
\newblock {\em Ann. Inst. H. Poincar\'e{} Probab. Statist.}, 32(1):69--105,
  1996.

\bibitem[NO02]{MR1934157}
David Nualart and Youssef Ouknine.
\newblock Regularization of differential equations by fractional noise.
\newblock {\em Stochastic Process. Appl.}, 102(1):103--116, 2002.

\bibitem[NO03]{MR2073441}
David Nualart and Youssef Ouknine.
\newblock Stochastic differential equations with additive fractional noise and
  locally unbounded drift.
\newblock In {\em Stochastic inequalities and applications}, volume~56 of {\em
  Progr. Probab.}, pages 353--365. Birkh\"{a}user, Basel, 2003.

\bibitem[NP91]{NualartPardoux}
D.~Nualart and \'{E}. Pardoux.
\newblock Boundary value problems for stochastic differential equations.
\newblock {\em Ann. Probab.}, 19(3):1118--1144, 1991.

\bibitem[OP89]{MR1002898}
Daniel Ocone and \'Etienne Pardoux.
\newblock Linear stochastic differential equations with boundary conditions.
\newblock {\em Probab. Theory Related Fields}, 82(4):489--526, 1989.

\bibitem[Rez14]{rezakhanlou2014regular}
Fraydoun Rezakhanlou.
\newblock Regular flows for diffusions with rough drifts.
\newblock {\em arXiv preprint arXiv:1405.5856}, 2014.

\bibitem[RS17]{MR3567487}
Sebastian Riedel and Michael Scheutzow.
\newblock Rough differential equations with unbounded drift term.
\newblock {\em J. Differential Equations}, 262(1):283--312, 2017.

\bibitem[Rus80]{MR580907}
Andrzej Russek.
\newblock Gaussian {$n$}-{M}arkovian processes and stochastic boundary value
  problems.
\newblock {\em Z. Wahrsch. Verw. Gebiete}, 53(2):117--122, 1980.

\bibitem[RY99]{MR1725357}
Daniel Revuz and Marc Yor.
\newblock {\em Continuous martingales and {B}rownian motion}, volume 293 of
  {\em Grundlehren der mathematischen Wissenschaften [Fundamental Principles of
  Mathematical Sciences]}.
\newblock Springer-Verlag, Berlin, third edition, 1999.

\bibitem[Sha16]{AVS}
Alexander Shaposhnikov.
\newblock Some remarks on {D}avie's uniqueness theorem.
\newblock {\em Proc. Edinb. Math. Soc. (2)}, 59(4):1019--1035, 2016.

\bibitem[Sha17]{shaposhnikov2017correction}
Alexander Shaposhnikov.
\newblock Correction to the paper" some remarks on davie's uniqueness theorem".
\newblock {\em arXiv preprint arXiv:1703.06598}, 2017.

\bibitem[SS17]{SS17}
Michael Scheutzow and Susanne Schulze.
\newblock Strong completeness and semi-flows for stochastic differential
  equations with monotone drift.
\newblock {\em J. Math. Anal. Appl.}, 446(2):1555--1570, 2017.

\bibitem[SV06]{MR2190038}
Daniel~W. Stroock and S.~R.~Srinivasa Varadhan.
\newblock {\em Multidimensional diffusion processes}.
\newblock Classics in Mathematics. Springer-Verlag, Berlin, 2006.
\newblock Reprint of the 1997 edition.

\bibitem[SW22]{ShaWre}
Alexander Shaposhnikov and Lukas Wresch.
\newblock Pathwise vs. path-by-path uniqueness.
\newblock {\em Ann. Inst. Henri Poincar\'{e} Probab. Stat.}, 58(3):1640--1649,
  2022.

\bibitem[Ver79]{Ve}
Alexander.~Ju. Veretennikov.
\newblock Strong solutions of stochastic differential equations.
\newblock {\em Teor. Veroyatnost. i Primenen.}, 24(2):348--360, 1979.

\bibitem[Zha10]{MR2653231}
Xicheng Zhang.
\newblock A stochastic representation for backward incompressible
  {N}avier-{S}tokes equations.
\newblock {\em Probab. Theory Related Fields}, 148(1-2):305--332, 2010.

\bibitem[Zvo74]{Z}
Alexander~K. Zvonkin.
\newblock A transformation of the phase space of a diffusion process that will
  remove the drift.
\newblock {\em Mat. Sb. (N.S.)}, 93(135):129--149, 152, 1974.

\end{thebibliography}
\end{document}